	\definecolor{colorAbsorbing}{RGB}{164, 81, 60}
	\definecolor{colorDelta}{RGB}{255, 205, 204}
	\definecolor{colorDeltaBound}{RGB}{195, 148, 147}
	\definecolor{colorDomain}{RGB}{50, 204, 212}
	\definecolor{colorDomainBound}{RGB}{0, 118, 125}
	\definecolor{colorInitialPoint}{RGB}{116, 54, 73}
\NewDocumentCommand{\skd}{}{Skorohod\xspace}
\DeclarePairedDelimiterX{\scalarproduct}[1]{\langle}{\rangle}{
	\IfBlankTF{#1}{\: .\:,.\: }{#1}%
}
\DeclarePairedDelimiterX{\floor}[1]{\lfloor}{\rfloor}{
	\IfBlankTF{#1}{\: .\:}{#1}%
}
\NewDocumentCommand{\norm}{om}{%
	\mleft\|%
	\IfBlankTF{#2}{\:.\:}{#2}
	\mright\|\IfNoValueTF{#1}{}{_{#1}}
}
\NewDocumentCommand{\abs}{om}{%
	\mleft|%
	\IfBlankTF{#2}{\:.\:}{#2}
	\mright|\IfNoValueTF{#1}{}{_{#1}}
}
\DeclarePairedDelimiterX{\range}[1]{\llbracket}{\rrbracket}{
	\IfBlankTF{#1}{\: .\:,.\: }{#1}%
}
\NewDocumentCommand{\set}{mo}{\left\{#1 \IfNoValueTF{#2}{}{,~#2}\right\}}
\NewDocumentCommand{\nintegers}{}{\dsN}
\NewDocumentCommand{\reals}{}{\dsR}
\NewDocumentCommand{\rationals}{}{\dsQ}
\RenewDocumentCommand{\leq}{}{\leqslant}
\RenewDocumentCommand{\geq}{}{\geqslant}
\NewDocumentCommand{\continuous}{omo}{%
	\calC%
    \IfNoValueTF{#1}{}{^{#1}}%
    \IfNoValueTF{#3}{}{_{#3}}%
    \IfBlankTF{#2}{}{\!\br{#2}}
}
\NewDocumentCommand{\borelfun}{m}{\calB_b\br{#1}}
\DeclareMathOperator*{\argmin}{arg\,min}
\NewDocumentCommand{\domain}{m}{\calD(#1)}
\DeclareMathOperator{\diag}{diag}
\NewDocumentCommand{\fun}{ommoo}{%
    \IfNoValueTF{#1}{}{#1:}%
    \IfNoValueTF{#4}{#2\longmapsto #3}{
        \begin{array}{ccl}%
            #4 & \longrightarrow & \IfNoValueTF{#5}{#4}{#5} \\ 
            #2 & \longmapsto & #3 \\
        \end{array}%
    } 
}
\DeclareMathOperator{\card}{card}
\NewDocumentCommand{\closure}{m}{\overline{#1}}
\NewDocumentCommand{\1}{}{\mathds{1}}
\NewDocumentCommand{\proba}{omo}{\bfP\IfNoValueF{#1}{_{#1}}\IfBlankTF{#2}{}{\IfNoValueTF{#3}{\br{#2}}{\br{#2\mid#3}}}}
\NewDocumentCommand{\expect}{omo}{%
    \bfE\IfNoValueF{#1}{_{#1}}\left[#2\IfNoValueF{#3}{\mid #3}\right]
    }
\NewDocumentCommand{\chain}{o}{Y\IfNoValueF{#1}{_{#1}}}
\NewDocumentCommand{\chainWF}{o}{Y^{\mathsf{WF}}\IfNoValueF{#1}{_{#1}}}
\NewDocumentCommand{\exchange}{O{N}}{\Phi_{#1}}
\NewDocumentCommand{\bernstein}{O{N}}{B_{#1}}
\NewDocumentCommand{\transition}{O{N}}{Q_{#1}}
\NewDocumentCommand{\process}{o}{X\IfNoValueF{#1}{^{#1}}}
\NewDocumentCommand{\brownian}{}{B}
\NewDocumentCommand{\browniancombi}{}{W}
\NewDocumentCommand{\processcombi}{}{R}
\NewDocumentCommand{\densityspace}{O{N}}{K_{#1}}
\NewDocumentCommand{\hypercube}{}{K}
\NewDocumentCommand{\absorbzone}{O{\alpha}}{\Delta_{#1}}
\NewDocumentCommand{\absorbingstates}{}{\calP}
\NewDocumentCommand{\indices}{o}{\calI\IfNoValueF{#1}{_{#1}}}
\NewDocumentCommand{\face}{}{\calF}
\NewDocumentCommand{\normal}{o}{\overline{n}\IfNoValueF{#1}{_{#1}}}
\NewDocumentCommand{\zero}{}{0_m}
\NewDocumentCommand{\one}{}{1_m}
\NewDocumentCommand{\boundedcube}{}{\calB_b(\hypercube)}
\NewDocumentCommand{\continuouscube}{o}{\IfNoValueTF{#1}{\continuous{\hypercube}}{\continuous[#1]{\hypercube}}}
\NewDocumentCommand{\polynomials}{o}{\calP\IfNoValueF{#1}{_{#1}}(\hypercube)}
\NewDocumentCommand{\rcll}{}{D(\reals_+,\hypercube)}
\NewDocumentCommand{\drift}{o}{b\IfNoValueF{#1}{_{#1}}}
\NewDocumentCommand{\diffusion}{o}{a\IfNoValueF{#1}{_{#1}}}
\NewDocumentCommand{\dispersion}{}{\sigma}
\NewDocumentCommand{\hittingtime}{m}{T_{#1}}
\NewDocumentCommand{\normC}{om}{\norm[\IfNoValueTF{#1}{\hypercube}{\continuous[#1]{\hypercube}}]{#2}}
\NewDocumentCommand{\seminormC}{mm}{\abs[\continuous[#1]{\hypercube}]{#2}}
\NewDocumentCommand{\operator}{}{C}
\NewDocumentCommand{\diffop}{}{A}
\NewDocumentCommand{\driftop}{}{B}
\NewDocumentCommand{\diffsemigroup}{}{U}
\NewDocumentCommand{\driftsemigroup}{}{V}
\NewDocumentCommand{\semigroup}{o}{S\IfNoValueF{#1}{_{#1}}}
\NewDocumentCommand{\generator}{o}{L\IfNoValueF{#1}{_{#1}}}
\NewDocumentCommand{\distortionprod}{}{\pi_d}
\NewDocumentCommand{\distortion}{o}{d\IfNoValueF{#1}{_{#1}}}
\NewDocumentCommand{\distortioncombi}{}{\varphi_{d}}
\title{Feller Property and Absorption of Diffusions for Multi-Species Metacommunities}
\author{%
    Benoît \textsc{Henry}%
    \thanks{
        Univ. Lille, CNRS, IMT Nord Europe, Inria, UMR 8524 -- Laboratoire Paul Painlevé, F-59000 Lille, France.\\
        E-mail address: \href{mailto:benoit.henry@imt-nord-europe.fr}%
    {benoit.henry@imt-nord-europe.fr}%
    }%
    \and%
    Céline \textsc{Wang}%
    \thanks{%
        Univ. Lille, CNRS, Inria PARADYSE, UMR 8524 -- Laboratoire Paul Painlevé, F-59000 Lille, France.\\
        E-mail address: \href{mailto:celine.wang@univ-lille.fr}%
        {celine.wang@univ-lille.fr}
    }
}
\begin{document}
    \maketitle 
    \begin{abstract}[metacommunity, Wright-Fisher model, Feller process, absorption]
        We consider individuals of two species distributed over $m$ \emph{patches}, each with a hosting capacity $\distortion[i]N$, where $\distortion[i]\in (0,1]$. We assume that all the patches are linked by the \emph{dispersal} of individuals. This work examines how the metacommunity evolves in these patches. The model incorporates Wright-Fisher intra-patch reproduction and a general exchange function representing dispersal. Under minimal assumptions, we demonstrate that as $N$ approaches infinity, the processes converge to a diffusion process for which we establish the Feller property. We prove that the limiting process almost surely reaches the absorbing states in finite time.
    \end{abstract} 
                  
\section*{Introduction}

    Community ecology is the study of the \emph{local dynamics} of interacting species within a given habitat. It takes into account local factors such as environmental conditions, competition and predation, which are assumed to be homogeneous. The individuals of a single species in a given area are called a \emph{population}, and all the interacting individuals within it are referred to as a \emph{local community}. Local communities can be linked by \emph{dispersal}, \ie immigration and emigration between areas. The set of interacting communities is referred to as a \emph{metacommunity}, and the populations of a single species within the metacommunity are referred to as a \emph{metapopulation}. Metacommunity theory aims to capture the dynamics at a regional scale, where species are distributed across several linked areas of habitat, known as \emph{patches}, which may have distinct local characteristics. Metacommunity frameworks are well suited to modelling organisms such as microbiomes \autocite{Johnson2025_InteractingHostsMicrobiomeExchange}, terrestrial plant species and phytoplankton species \autocite{PicocheBarraquand2022_SeedBanksPhytoplankton}.
    
    The Wright-Fisher model is a classic model of population genetics used to describe genetic variation (or evolutionary types) in populations of organisms with discrete, non-overlapping generations (such as annual plants). The associated processes are Markov processes: the individuals in a generation are offspring of parents chosen uniformly and independently at random from the immediate preceding generation. The original model incorporates two evolutionary factors that influence the dynamics: mutation and selection (for a detailed description of the model, refer to \autocite[Section 1 of Chapter 10]{ethierkurtz:86:markov}). The Wright-Fisher model and its diffusion approximations have been generalised and studied in many ways, taking into account other factors influencing population dynamics or incorporating characteristics of certain organisms. In biology, the model can describe the dynamics of species competing for resources. For instance, in articles such as \autocite{gonzalezcasanova2020} and \autocite{alsmeyer2026}, the authors propose extended models in which each individual type requires a specific amount of resources to produce offspring. Rather than fixing the number of individuals, they fix the number of resources. Another extension of interest for modelling competing species with large offspring variance, such as certain marine species, is the $\Lambda$-Wright-Fisher diffusion, which is developed, for example, in \autocite{blancas2023}. In ecology, models based on the Wright-Fisher model are used to describe plant metapopulations with seed banks. For example, in \autocite{kaj2001}, \autocite{blathkurt2021} and \autocite{louvet2022}, the parent of an individual is chosen at random from a previous generation rather than from the generation immediately preceding it.

    In the original model with two alleles or types, assuming that each generation has a constant lifespan equal to the inverse of the (adult) population size, the Wright-Fisher process (which gives the frequency of an allele or type within the population) can be approximated by a diffusion process in large populations. This diffusion process is a solution to a Stochastic Differential Equation (SDE) of the form
    \[
        \d Y_t = \sigma\sqrt{Y_t\br{1-Y_t}}\d B_t + \br{\alpha-\beta Y_t}\d t,
    \]
    where $\alpha$, $\beta$ and $\sigma$ are positive real numbers with $\alpha\geq \beta$, and $B$ is a standard Brownian motion. It is standard practice to extend the coefficients to Hölder and Lipschitz continuous functions in $\reals$ to solve the SDE. Using Yamada functions (introduced in \autocite{YamadaWatanabe1971}), it is possible to prove that the solution to the SDE with extended coefficients is $[0,1]$-valued and to establish strong uniqueness (see, for example, \autocite[184-185]{alfonsi2015affine}). Feller's test for explosions (\cf \autocite[342-350]{KaratzasShreve1991} and \autocite[Chapter 8]{ethierkurtz:86:markov}) enables us to determine the nature and accessibility of the boundary of $[0,1]$, as well as the domain of its infinitesimal generator. For the study of populations with at least $3$ alleles or types, the diffusion process is defined as a solution to a SDE for which the construction is not straightforward, as there is no general method to establish weak uniqueness for a SDE, nor are there general results explicitly determining the domain of the infinitesimal generator. In \autocite{Ethier1976}, the author establishes the weak uniqueness of the SDE and proves that the infinitesimal generator of the diffusion process is given by the closure of the degenerate differential operator.
   
    In this work, we use the Wright-Fisher-based metacommunity model introduced in \autocite{delvoye} as our starting point. The original model considers a metacommunity of two species across two patches, featuring independent intra-patch neutral Wright-Fisher reproduction and linear inter-patch dispersal of individuals. We generalise the dispersal by considering $m$ patches and an exchange function $\exchange$, which is subject only to the constraints that metapopulation sizes are conserved, and that $N\br{\exchange-I}$ can be approached by a regular function, denoted by $\drift$, when $N$ is large. The processes that we study are Feller processes on $\hypercube={[0,1]}^m$. We observe that for any $f\in\continuouscube[2]$, the functions $\generator[N]f$ (where $\generator[N]$ is the infinitesimal generator of the process with population sizes $d_1N,\dots, d_mN$) converge to $\generator f$, which is defined by
    \[
        \generator f = \frac{1}{2}\sum_{i=1}^m \frac{x_i\br{1-x_i}}{d_i}\pd{f}{x_i,x_i}+\sum_{i=1}^m \drift[i](x)\pd{f}{x_i},~\domain{\generator} = \continuouscube[2]
    \]
    (see \zcref{sec:convergence} for the details). By adapting the proofs in \autocite{Ethier1976}, we demonstrate that its closure $\closure{\generator}$ generates a Feller semigroup. In particular, $\continuouscube[2]$ will be a core for the infinitesimal generator. This result is interesting for two reasons. Firstly, the infinitesimal generator can be characterised without specifying boundary conditions. Secondly, this fact is then exploited to prove the convergence of the discrete model to its diffusive approximation.
        
    Due to the conservation property of the model, if a species disappears at a given time, it will not reappear. Thus, $\zero = \br{0,\dots, 0}$ and $\one = \br{1,\dots, 1}$ are absorbing for the diffusion process. One natural question that arises is whether a species will go extinct, \ie whether the process will hit $\absorbingstates=\set{\zero, \one}$ within a finite time. In the context of multidimensional Wright-Fisher models, these questions have already been investigated. In models involving mutations, a given trait can reappear, thereby preventing extinction and leading to the existence of a stationary distribution (see \eg \autocite{jenkins2017exact}). Without mutation, the question has essentially been settled in \autocite{Ethier79Absorption}. However, in this case, the $m-1$-dimensional faces of the simplex are absorbing, and the problem sequentially decreases in dimension as long as lower dimensional faces are reached by the process, ultimately returning to the well-known one-dimensional problem. Our case is substantially more difficult: since a species can always reappear through immigration if it is absent from a patch, extinction occurs if and only if the diffusion exactly hits the particular points $\zero$ or $\one$ of the hypercube. The attainability of a corner by a multidimensional diffusive process is a difficult problem. Indeed, as pointed out by \autocite{ernst2021escape}, even for Brownian motion in $\reals_+^2$ (or in $[0,1]^m$), whether or not $(0,0)$ is reached in finite time depends on the boundary conditions used to define the process. One standard approach for hitting problems is to look at the associated Dirichlet Problem. When the infinitesimal generator of the process is a uniformly elliptic operator $\tilde{\generator}$ on an open bounded $\continuous[2]{}$ domain $D$, denoting by $\hittingtime{\partial D}$ the hitting time of the boundary, the function $\fun[h]{x}{\proba[x]{\hittingtime{\partial D}<\infty}}$ is the unique smooth solution to the Dirichlet Problem
    \[
        \begin{cases}
            \tilde{\generator} u  = 0 &\text{ in } D\\ 
            u_{|\partial D}   = 1, 
        \end{cases}
    \]
   and the maximum principle tells us that $h$ is constant and is equal to $1$. In our case, for the hitting time $\hittingtime{\absorbingstates}$ of $\absorbingstates$, we would expect the function $\fun[h]{x}{\proba[x]{\hittingtime{\absorbingstates}<\infty}}$ to be a solution to the Problem 
   \begin{equation}
   \zcsetup{reftype=dproblem}
   \label{eq:dirichlet-problem}
        \begin{cases}
            \generator u = 0  &\text{ in } \hypercube\\ 
            u_{|\absorbingstates}   = 1. 
        \end{cases}
   \end{equation}
   However, the operator $\generator$ degenerates on the entire boundary. To adapt this approach, we should consider an analogous boundary value problem of the form 
   \[
        \begin{cases}
            \generator u  = 0  &\text{ in }\hypercube\\
            u_{|\Sigma} = 1, 
        \end{cases}
   \]
    where $\Sigma$ is the subset of $\partial \hypercube$ where the Fichera function $\fun[\psi]{x}{\sum_{i=1}^m\br{\drift[i](x)-\frac{1-2x_i}{\distortion[i]}}}\normal[i](x)$ (with $\normal=\br{\normal[1],\dots,\normal[m]}$ associated with $\generator$, the inward normal vector to the boundary $\partial \hypercube$) is negative (we refer to \autocite{oleinik2012second} for the boundary value problems for degenerate operators and Fichera functions). However, establishing the existence of a solution to this problem is difficult, and the maximum principle is not applicable in such a general case. Moreover, for many choices of $\drift$, the Fichera's boundary $\Sigma$ equals $\partial K$, which entails that the \zcref{eq:dirichlet-problem} is not even well-posed. 

    The paper is organised as follows. In \zcref{sec:model}, we provide a detailed description of our model. Based on minimal assumptions regarding the exchange functions, we then state our main results about the diffusion approximation. The subsequent sections are dedicated to the proofs of these results.

\section{The Model and Main Results}
\zcsetup{reftype=section}
\label{sec:model}
    \begin{notation}
        \zcsetup{reftype=notation}
    	\label{not:cube}
    	We denote by $\hypercube$ the compact subset ${[0,1]}^m$ of $\reals^m$. An element $x\in\hypercube$ belongs to the boundary if there exists $i\in\range{1,m}$ such that $x_i\in\set{0,1}$. For each $x\in\hypercube$, we write the set of indices $i$ for which $x_i=0$ as $\indices[0](x)$, those for which $x_i=1$ as $\indices[1](x)$, and the union of these two sets $\indices[0](x)\cup\indices[1](x)$ as $\indices(x)$.
    	We denote by $\normal = \br{\normal[1],\dots,\normal[m]}$ the inward orthogonal vector to the boundary $\partial \hypercube$ of $\hypercube$, defined by 
    	\[
    	\normal[i](x) = 
    	\begin{cases}
    		1 & \text{if } i\in\indices[0](x)\\ 
    		-1 & \text{if } i\in\indices[1](x)\\
    		0 & \text{otherwise}
    	\end{cases}
    	\]
    	for any $x\in\partial \hypercube$. We write $\zero$ for $(0,\dots,0)$ and $\one$ for $(1,\dots, 1)$. The set $\set{\zero,\one}$ is referred to as $\absorbingstates$.
    \end{notation}
        
    \subsection{Finite Population Model}
    \zcsetup{reftype=subsection}
    \label{subsec:finite-pop-model}
        Let $N$ be a positive integer. We consider a metacommunity comprising two species, $\alpha$ and $\beta$, distributed across $m$ patches hosting $N_1 = \distortion[1] N$,..., $N_{m-1}  = \distortion[m-1]$ and $N_m = \distortion[m] N$ individuals respectively, where $\distortion[1],\dots, \distortion[m-1]$ and $\distortion[m]$ are positive real numbers such that $\distortion[m]\leq \distortion[m-1]\leq \cdots \leq \distortion[1] = 1$, which are called \emph{distortions}. We assume that the generations do not overlap. Individuals of each generation migrate between patches and reproduce simultaneously. For each $k\in\nintegers$, we denote by $\chain[k] = \br{\chain[k]^{(1)},\dots, \chain[k]^{(m)}}$ the $m$-tuple of population densities of species $\alpha$ (\ie the ratio of the $\alpha$ population to the size of the patch) in each patch just before the reproduction phase, constituting generation $k+1$.
        \begin{itemize}
            \item The reproduction phase that constitutes a new generation $k+1$ from the generation $k$ is described by the neutral Wright-Fisher model independently in each patch. If $\br{\chainWF[k+1]}^{(i)}$ denotes the population density of species $\alpha$ in the $i$-th patch after reproduction, then the transition distribution is given by 
            \[
                \proba{{\br{\chainWF[k+1]}}^{(i)} = \frac{\ell}{N_i}}[\chain[k]^{(i)}=x] = \binom{N_i}{k}x^{\ell}(1-x)^{N_i-\ell},~\ell\in\range{0,N_i},~x\in\hypercube.
            \]
            \item The migration phase for generation $k+1$ is defined by a deterministic transformation $\exchange:\hypercube\rightarrow\hypercube$. The $m$-tuple $\chainWF[k+1]= \br{{\br{\chainWF[k+1]}}^{(1)},\dots, {\br{\chainWF[k+1]}}^{(m)}}$ becomes after migration
            \[
                \chain[k+1] = \exchange\br{\chainWF[k+1]}.
            \]
        \end{itemize}
        The sequence $\br{\chain[k]}$ is thereby a Markov chain with state-space $\hypercube$, and whose transition operator $\transition$ is given by
        \[
            \transition f(x) = \expect{f(\chain[k+1])}[\chain[k]=x]  = \bernstein \br{f\circ\exchange}(x)
        \]
        for any $x\in\hypercube$ and function $\fun[f]{\hypercube}{\reals}$, $\bernstein$ denoting the Bernstein operator defined by
        \[
            \bernstein f (x) = \sum_{k_1 = 0}^{N_1}\cdots \sum_{k_m=0}^{N_m} \binom{N_1}{k_1}\cdots \binom{N_m}{k_m}f\br{\frac{k_1}{N_1},\dots, \frac{k_m}{N_m}}x_1^{k_1}(1-x_1)^{N_1-k_1}\cdots x_m^{k_m}(1-x_m)^{N_m-k_m}
        \]
        for each $f\in\continuouscube$ and $x\in \hypercube$. 
        
        Let $\process[N] = \set{\process[N]_t}[t\geq 0]$ be the jump Markov process on $\hypercube$ with transition rate $N$, induced by the Markov chain $\br{\chain[k]}$. Then, its semigroup $\set{\semigroup[N](t)}[t\geq 0]$ is given by
        \[
            \semigroup[N](t) = e^{-Nt}\sum_{k=0}^{\infty}\frac{\br{Nt}^k}{k!}\transition^k,
        \]
        for any $t\in\reals_+$.  Since $\transition$ is a bounded operator on $\continuouscube$, $\set{\semigroup[N](t)}[t\geq 0]$ is a Feller semigroup, and its infinitesimal generator is $\generator[N] = N\br{\transition  - I}$.
        
    \subsection{Hypotheses on the Exchange Transformations and the Drift Coefficients}
    \label{subsec:hypotheses-exchange}
        We make the following assumptions: 
        \begin{hypothesis}
            \zcsetup{reftype=hypothesis}
        	\label{hyp:conservation}
        	The migration maintains the metapopulation size, \ie the exchange transformation $\exchange = \br{\exchange^{(1)},\dots, \exchange^{(m)}}$ satisfies for each $x\in\hypercube$,
        	\[
        	    N_1\exchange^{(1)}\br{x}+\cdots+N_m \exchange^{(m)}\br{x} = N_1x_1+\cdots+N_mx_m.
        	\]
        \end{hypothesis}
        \begin{hypothesis}
            \zcsetup{reftype=hypothesis}
            \label{hyp:existence-b}
             We assume that 
             \[
                \lim_{N\to\infty} \sup_{x\in \hypercube}\abs{\exchange(x)-x} = 0,
             \]
             and that there exists a function $\drift = \br{\drift[1],\dots, \drift[m]}\in{\continuous[2]{\hypercube,\reals^m}}$ such that
             \[
             	\lim_{N\to\infty}\sup_{x\in\hypercube}\abs{N \br{\exchange\br{x}-x}-\drift(x)} = 0.
             \]
        \end{hypothesis}
        This can be interpreted as follows: for each $i\in\range{1,m}$ and each $x_N\in\hypercube$, the number 
        \[
        N_i\br{\exchange^{(i)}\br{x_N}-x_N^{(i)}}
        \]
        is the net migration of the population $\alpha$ in the $i$-th patch if the densities of the population $\alpha$ on the patches before the migration are given by $x_N$. When the population size diverges to infinity, the net migration on patch $i$ will converge uniformly to $\distortion_i\drift[i]$. 
        
        Let $\drift\in\continuous[2]{\hypercube,\reals^m}$ be the function defined in \zcref{hyp:existence-b}. The assumptions made about the exchange functions imply certain properties of the function $\drift$. 

        For any $i\in\range{1,m}$, since the function $\exchange^{(i)}$ is $[0,1]$-valued, in particular,
        \[
            N\exchange^{(i)}(x) \geq 0 \text{ if } x_i=0 \text{ and } N\br{\exchange^{(i)}(x)-1}\leq 0 \text{ if } x_i=1.
        \]
        which leads to the following property:
        \begin{property}
            \zcsetup{reftype=property}
			\label{prop:b-boundary-non-negative}
			For any $x\in\hypercube$,
			\[
			\drift[i](x)\geq 0 \text{ if } x_i=0 \text{ and } \drift[i](x)\leq 0 \text{ if }x_i=1,
			\]
			or in other words, for any $x\in\partial\hypercube$,
			\[
				\scalarproduct{\drift(x),\normal(x)}\geq 0.
			\]
		\end{property}
        Given the conservation property defined in \zcref{hyp:conservation}, for any positive integers $N$ and $x$ in $\hypercube$,
        \[
            \scalarproduct{\distortion, N\br{\exchange\br{x}-x}} = 0,
       	\]
        thus:
        \begin{property}
            \zcsetup{reftype=property}
        	\label{prop:b-combi}
       		\[
       			\scalarproduct{\distortion,\drift} = \sum_{i=1}^m\distortion[i]\drift[i]=0.
       		\]
        \end{property}
        
        Finally, since the distortions are positive, in particular,  
        \[
            N\br{\exchange\br{\zero}-\zero} = N\br{\exchange\br{\one}-\one} = \zero.
        \]
        As a result:
        \begin{property}
            \zcsetup{reftype=property}
        	\label{prop:b-absorption}
        	\[
        		\drift(\zero) = \drift(\one) = 0.
        	\]
     	\end{property}

    \subsection{Main Results}
        Let $\diffusion$ be the diagonal matrix-valued function $\diffusion$ on $\hypercube$ defined by 
        \begin{equation}
            \label{def:a}
            \diffusion(x) = \diag\br{\diffusion[1](x),\dots, \diffusion[m](x)} = \diag\br{\frac{x_1(1-x_1)}{\distortion[1]},\dots, \frac{x_m(1-x_m)}{\distortion[m]}},
        \end{equation}
        for any $x\in\hypercube$, and define the second order differential operator
        \begin{equation}
            \label{def:L}
            \generator = \frac{1}{2}\sum_{i=1}^m \diffusion[i](x)\pd{!}{x_i,x_i}+\sum_{i=1}^m \drift[i](x)\pd{!}{x_i},~\domain{\generator}=\continuouscube[2],
        \end{equation}
        which is elliptic on $\hypercube$, but degenerates on the entire boundary.
        
        \begin{theorem}
            \zcsetup{reftype=theorem}
            \label{th:L-feller}
            Assume that the exchange functions $\exchange$ satisfy \zcref{hyp:conservation, hyp:existence-b} (hence, the function $\drift$ defined in \zcref{hyp:existence-b} satisfies \zcref{prop:b-boundary-non-negative}).
            Let $\generator$ be the operator defined by (\ref{def:L}), where $\diffusion$ is defined by (\ref{def:a}). Then the operator $\generator$ is closable, and its closure generates a Feller semigroup $\set{\semigroup(t)}[t\geq 0]$ on $\continuouscube$. In particular, $\continuouscube[2]$ is a core for the infinitesimal generator of this semigroup. 
        \end{theorem}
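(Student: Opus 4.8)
The plan is to follow the strategy of \autocite{Ethier1976}, which establishes exactly this kind of result for multi-allele Wright-Fisher operators on the simplex, and to adapt it to the hypercube $\hypercube$ with the drift $\drift$ satisfying \zcref{prop:b-boundary-non-negative}. By the Hille-Yosida theorem (in the form suited to Feller semigroups, \cf \autocite[Chapter 1]{ethierkurtz:86:markov}), it suffices to check three things: that $\generator$ is dissipative (equivalently, satisfies the positive maximum principle on $\continuouscube[2]$), that $\continuouscube[2]$ is dense in $\continuouscube$, and that the range of $\lambda - \generator$ is dense in $\continuouscube$ for some (hence all) $\lambda>0$. Density of $\continuouscube[2]$ is classical (polynomials are dense by Stone-Weierstrass, and $\polynomials\subset\continuouscube[2]$). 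The positive maximum principle is the easy structural step: if $f\in\continuouscube[2]$ attains a nonnegative maximum at $x_0\in\hypercube$, then at an interior maximum $\nabla f(x_0)=0$ and the Hessian is negative semidefinite, so $\generator f(x_0)\leq 0$ since $\diffusion$ is positive semidefinite; at a boundary maximum one uses in addition that $\partial f/\partial x_i (x_0)$ has the sign of $-\normal[i](x_0)$ for $i\in\indices(x_0)$ while $\diffusion[i](x_0)=0$ precisely for those $i$, together with \zcref{prop:b-boundary-non-negative} which gives $\drift[i](x_0)\normal[i](x_0)\geq 0$, i.e. $\drift[i](x_0)\partial f/\partial x_i(x_0)\leq 0$. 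Closability then follows from dissipativity (a dissipative operator with dense domain on a Banach space is closable).

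The substance of the proof — and the main obstacle — is the range condition: given $g\in\continuouscube$ and $\lambda>0$, solve $\lambda f - \generator f = g$ with $f\in\continuouscube[2]$, at least for $g$ in a dense subset. Following \autocite{Ethier1976}, I would first take $g$ a polynomial. The key algebraic observation is that $\generator$ maps $\polynomials[n]$, the polynomials of degree at most $n$, into itself: the second-order part $\tfrac12\sum_i \diffusion[i]\partial^2_{x_i}$ sends a monomial $x^{\gamma}$ to a combination of monomials of degree $\leq \deg\gamma$ (since $x_i(1-x_i)\partial^2_{x_i}$ lowers or preserves degree in that variable), and $\drift\in\continuous[2]{}$ is \emph{not} polynomial, so this step does not literally work for the drift term. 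Here one must instead argue as in \autocite{Ethier1976} using a perturbation/approximation: write $\generator=\diffop+\driftop$ where $\diffop=\tfrac12\sum_i\diffusion[i]\partial^2_{x_i}$ is the pure Wright-Fisher part (which does preserve $\polynomials[n]$) and $\driftop=\sum_i\drift[i]\partial_{x_i}$ is a bounded perturbation of the generator of the diffusion part on the scale of spaces. One shows $\closure{\diffop}$ generates a Feller semigroup by the polynomial-invariance argument: on the finite-dimensional space $\polynomials[n]$ the operator $\lambda-\diffop$ is invertible provided its spectrum avoids $\lambda$, which one checks by exhibiting a strictly upper-triangular (nilpotent-plus-diagonal) structure for $\diffop$ in a suitable monomial basis with nonpositive diagonal entries; hence $\bigcup_n\polynomials[n]=\polynomials{}$ lies in the range of $\lambda-\diffop$, giving the range condition for $\diffop$. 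Then one adds back $\driftop$: since $\driftop$ is a first-order operator with $\continuous[2]{}$ coefficients, it is bounded relative to $\closure{\diffop}$ with relative bound $0$ once one has enough regularity of the resolvent of $\closure{\diffop}$ (this is where Ethier's estimates on derivatives of $(\lambda-\diffop)^{-1}g$ are used), so the bounded-perturbation theorem for Feller semigroups applies and $\closure{\generator}=\closure{\diffop+\driftop}$ generates a Feller semigroup with the same core $\continuouscube[2]$.

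The delicate points I anticipate having to work through carefully are: (i) the triangularity/spectral computation for $\diffop$ on $\polynomials[n]$ — on the hypercube this factorizes nicely over the $m$ coordinates since $\diffop=\sum_i \diffop_i$ with $\diffop_i$ acting only in $x_i$, each $\diffop_i$ being a one-dimensional Wright-Fisher operator whose action on $\{x_i^k\}$ is explicitly triangular with diagonal entry $-\tfrac{1}{2\distortion[i]}k(k-1)\leq 0$, so the tensor structure makes this cleaner than in the simplex case; (ii) obtaining uniform-in-$\lambda$ (or at least locally uniform) bounds on the first and second derivatives of the resolvent $(\lambda-\closure{\diffop})^{-1}g$ for $g$ smooth, which is the real analytic core and follows Ethier's argument of differentiating the SDE / semigroup and using the boundedness of the Wright-Fisher flow; and (iii) checking that the perturbation argument indeed preserves the core $\continuouscube[2]$ rather than merely some abstract core — this uses that $\continuouscube[2]$ is invariant under neither semigroup in general but is a core because it contains $\polynomials{}$, which is a core for $\closure{\diffop}$ and is mapped by the bounded operator $\driftop$ into $\continuouscube[1]\supset\continuouscube[2]$, so a standard core-perturbation lemma (e.g. \autocite[Chapter 1]{ethierkurtz:86:markov}) applies. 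The hardest single step is (ii), the resolvent derivative estimates; everything else is bookkeeping around the framework of \autocite{Ethier1976}.
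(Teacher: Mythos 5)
Your first half is sound and matches the paper's starting point: the positive maximum principle for $\generator$ on $\continuouscube[2]$, density of the domain, closability via dissipativity, and the generation of a Feller semigroup by $\closure{\diffop}$ through its action on polynomials (the paper gets the same conclusion in \zcref{lem:A-feller}, identifying the probabilistically defined semigroup with the one generated by the bounded operator $\diffop_n$ on $\polynomials[n]$ via Gr\"onwall). The genuine gap is your treatment of the drift. You propose to conclude by a perturbation theorem, claiming $\driftop$ is $\closure{\diffop}$-bounded with relative bound $0$. In the sup norm this is false precisely because $\diffop$ degenerates on $\partial\hypercube$ while $\drift$ need not vanish there: take $m=1$, $\distortion[1]=1$, $\drift[1](x)=1-x$ (which satisfies \zcref{prop:b-boundary-non-negative}) and $f_n(x)=(1-x)^n$; then $\normC{\driftop f_n}=n$, $\normC{f_n}=1$, while $\normC{\diffop f_n}=\tfrac{n(n-1)}{2}\sup_x x(1-x)^{n-1}\sim (n-1)/(2e)$, so any inequality $\normC{\driftop f}\leq a\normC{\diffop f}+C\normC{f}$ forces $a\geq 2e>1$. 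Hence neither the bounded-perturbation theorem (note $\driftop$ is not bounded on $\continuouscube$) nor the dissipative-perturbation theorem with relative bound $<1$ applies, and the "resolvent derivative estimates" you defer to in point (ii) would have to defeat exactly this boundary degeneracy; no argument is given, and this is not bookkeeping.

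What the paper (following Ethier) does instead is to avoid relative boundedness altogether: it constructs the drift semigroup $\set{\driftsemigroup(t)}$ directly from the ODE flow $\dot y=\drift(y)$, which stays in $\hypercube$ thanks to \zcref{prop:b-boundary-non-negative} (\zcref{lem:B-feller}); it proves that \emph{both} semigroups leave $\continuouscube[2]$ invariant with the quantitative bounds $\seminormC{2}{\diffsemigroup(t)f}\leq\seminormC{2}{f}$ and $\seminormC{2}{\driftsemigroup(t)f}\leq e^{\lambda t}\seminormC{2}{f}$ (\zcref{lem:A-estimate,lem:B-estimate}, obtained by differentiating the evolution equations and checking that the differentiated operators are dissipative); and it then applies the Trotter--Kato product formula of \autocite[Exercise 9 of Chapter 1]{ethierkurtz:86:markov} to the restriction of $\closure{\diffop}+\closure{\driftop}$ to the Banach space $\br{\continuouscube[2],\normC[2]{}}$, which is exactly $\generator$. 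In particular your point (iii) has the situation backwards: the $\continuouscube[2]$-invariance of the two semigroups is not dispensable, it is the hypothesis that makes the product-formula argument work, and it is where the real analytic effort of the proof lies. To repair your proposal you would either need to carry out these $C^2$-invariance estimates and invoke the product formula, or find a genuinely different verification of the range condition for $\generator$ itself; the perturbation route as stated does not close.
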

        
        Let $\Omega$ be the set $\continuous{\reals_+,\hypercube}$ endowed with the topology of uniform convergence on compact subsets of $\reals_+$, and $\scrB$ the associated Borel $\sigma$-algebra. Denote by $\process$ the canonical process on $\Omega$ defined by 
        \[
            \fun[\process_t]{\omega}{\process_t(\omega)=\omega(t)}[\Omega][\hypercube]
        \]
        for every $t\in\reals_+$, and by $\set{\scrB_t}$ its natural filtration. Once we have constructed the Feller semigroup $\set{\semigroup(t)}[t\geq 0]$, by Riesz Representation theorem and Daniell-Kolmogorov existence theorem, for any $x\in\hypercube$, there is a unique probability measure $\proba[x]{}^0$ such that the process $\process$ is a Feller process with initial distribution $\delta_x$ and whose semigroup is $\set{\semigroup(t)}[t\geq 0]$. 
        
        \begin{theorem}[Convergence in distribution]
            \zcsetup{reftype=theorem}
            \label{th:convergence}
            Let $\br{\process[N]}$ be the sequence of Feller processes on $\hypercube$ constructed in \zcref{subsec:finite-pop-model}, where the functions $\exchange$ satisfy \zcref{hyp:conservation,hyp:existence-b}, and $\drift$ the function in $\continuous[2]{\hypercube,\reals^m}$ given by \zcref{hyp:existence-b}. Let $\br{\mu_N}$ be the sequence of initial distributions of the processes $\process[N]$. If $\br{\mu_N}$ converges weakly to a probability measure $\mu$ on $\hypercube$, then $\br{\process[N]}$ converges to a limiting process with respect to the topology of \skd in the space $\rcll$ of càdlag functions $\reals_+\rightarrow\hypercube$, whose semigroup is $\set{\semigroup(t)}[t\geq 0]$ and whose initial distribution is $\mu$.
        \end{theorem}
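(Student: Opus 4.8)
The plan is to run the classical semigroup-convergence argument, in the style of the Trotter--Kato theorem and the Ethier--Kurtz limit theory for Feller processes. The two substantive ingredients are already in hand: by \zcref{th:L-feller}, $\continuouscube[2]$ is a core for the generator $\closure{\generator}$ of the Feller semigroup $\set{\semigroup(t)}[t\geq 0]$; and, as computed in \zcref{sec:convergence}, for every $f\in\continuouscube[2]$ one has the uniform convergence $\sup_{x\in\hypercube}\abs{\generator[N]f(x)-\generator f(x)}\longrightarrow 0$ as $N\to\infty$. Since $\generator[N]=N\br{\transition-I}$ is a bounded operator on $\continuouscube$, its domain is all of $\continuouscube$; in particular $\continuouscube[2]\subseteq\domain{\generator[N]}$, so one may take the constant approximating sequence $f_N\equiv f$ in what follows.

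First I would apply the Trotter--Kato approximation theorem (\cf \autocite[Theorem~6.1 of Chapter~1]{ethierkurtz:86:markov}): because $\continuouscube[2]$ is a core for $\closure{\generator}$ and $\generator[N]f\to\closure{\generator}f$ in $\continuouscube$ for every $f$ in this core, the Feller semigroups converge strongly, $\semigroup[N](t)f\to\semigroup(t)f$ for all $f\in\continuouscube$, uniformly for $t$ in bounded intervals, where $\set{\semigroup[N](t)}[t\geq 0]$ denotes the semigroup of $\process[N]$.

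Then I would feed this into the standard weak-convergence theorem for Feller processes on a locally compact separable metric space --- here the compact space $\hypercube$ --- which converts strong convergence of the Feller semigroups, together with the assumed weak convergence $\mu_N\Rightarrow\mu$ of the initial laws, into convergence in distribution $\process[N]\Rightarrow\process$ in $\rcll$ for the \skd topology, the limit being the Feller process with semigroup $\set{\semigroup(t)}[t\geq 0]$ and initial distribution $\mu$, \ie the canonical process $\process$ under $\int_{\hypercube}\proba[x]{}^0\,\mu(\d x)$ as introduced before the statement (see, \eg, \autocite[Chapter~4, Theorem~2.5]{ethierkurtz:86:markov}). To reconcile this with the choice $\Omega=\continuous{\reals_+,\hypercube}$, I would append the short remark that the limit has continuous paths: a jump of $\process[N]$ moves the state $x$ to $\exchange(Y)$, where $Y$ is an independent per-coordinate Wright--Fisher resampling of $x$, hence by a distance at most $\sup_{z\in\hypercube}\abs{\exchange(z)-z}+\abs{Y-x}$; the first term vanishes by \zcref{hyp:existence-b}, while Hoeffding's inequality bounds by $O(e^{-cN})$ (for some $c>0$) the probability that $\abs{Y-x}$ exceeds a fixed $\varepsilon$, and there are on average $Nt$ jumps before time $t$, so $\sup_{s\leq t}\abs{\process[N]_s-\process[N]_{s-}}\to 0$ in probability and the limiting law is supported on $\continuous{\reals_+,\hypercube}$.

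The genuine content of the statement has thereby been relegated to \zcref{th:L-feller} (the core property, which is the hard part of the paper) and to the generator expansion of \zcref{sec:convergence}; granting those, I do not expect a real obstacle, only the routine check of the hypotheses of the two cited theorems ($\hypercube$ compact; $\generator[N]$ and $\closure{\generator}$ generating Feller semigroups; $\continuouscube[2]$ dense and a core for $\closure{\generator}$). The one point deserving a little care is matching the modes of convergence --- \skd convergence in $\rcll$ versus uniform-on-compacts convergence in $\continuous{\reals_+,\hypercube}$ --- but, the limit being a.s.\ continuous, the two topologies coincide on its support, so nothing deeper is required.
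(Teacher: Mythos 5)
Your high-level route is the same as the paper's: the paper invokes a single packaged result (Kallenberg, Theorem 17.25) that combines the two steps you cite separately (Trotter--Kato convergence of the Feller semigroups, then the Ethier--Kurtz theorem converting semigroup convergence plus $\mu_N\Rightarrow\mu$ into \skd convergence), and both arguments hinge on \zcref{th:L-feller} providing $\continuouscube[2]$ as a core for $\closure{\generator}$. Your closing remark on continuity of the limiting paths is not needed for the statement as phrased (only \skd convergence in $\rcll$ is claimed), though it is harmless.

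The gap is that you never establish the one substantive analytic input your argument requires, namely $\normC{\generator[N]f-\generator f}\to 0$ for every $f\in\continuouscube[2]$. You attribute this to ``as computed in \zcref{sec:convergence}'', but that section \emph{is} the proof of the present theorem --- the introduction merely announces the fact and defers its verification there --- so the citation is circular and your proposal reduces the theorem to itself. This verification is where essentially all the work of the paper's proof lies: one writes $\generator[N]f=N\br{\bernstein\br{f\circ\exchange}-f}$ and bounds it by the three terms $\normC{\bernstein\br{N\br{f\circ\exchange-f}-\driftop f}}$, $\normC{\bernstein\br{\driftop f}-\driftop f}$ and $\normC{N\br{\bernstein f-f}-\diffop f}$. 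The last term is handled by the Voronovskaya-type limit $N\br{\bernstein[N_i]^{(i)}f-f}\to\frac{x_i(1-x_i)}{2\distortion[i]}\pd{f}{x_i,x_i}$ applied coordinatewise; the first by a second-order Taylor estimate, $\abs{N\br{f\circ\exchange(x)-f(x)}-\scalarproduct{\drift(x),\nabla f(x)}}\leq \sup_{\hypercube}\norm{H_f}\,N\sup_x\abs{\exchange(x)-x}^2+\sup_{\hypercube}\abs{\nabla f}\sup_x\abs{N\br{\exchange(x)-x}-\drift(x)}$, which tends to $0$ by \zcref{hyp:existence-b}, together with the fact that $\bernstein$ is a positive contraction; and the middle term by uniform convergence of Bernstein approximation for the continuous function $\driftop f$. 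With this computation supplied, your argument is complete and coincides with the paper's; without it, the key step is merely asserted.
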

        Now, let's focus on the problem of absorption of the diffusion.
        Due to \zcref{prop:b-absorption} and the fact that $\diffusion(\zero) = \diffusion(\one) = 0$, the states $\zero$ and $\one$ are absorbing. Since the diffusion coefficient $\diffusion$ degenerates on the entire boundary, if there exists a $1$-face, \ie a set of the form $\set{x\in\hypercube}[x_i=z]$, where $z\in\set{0,1}$, on which $\drift$ is null, then this face is absorbing. Therefore, the problem is controlled by the time it takes to reach this face or $\absorbingstates$, and it would reduce to a lower-dimensional problem  (as in \autocite{Ethier79Absorption}). On the other hand, if it happens that the drift pushes towards the interior of the hypercube everywhere on $\partial \hypercube\setminus\absorbingstates$, then the process is absorbed only if it specifically hits the corners $\zero$ and $\one$. This is the hardest situation (as raised in the introduction). The intermediate cases, where the drift $\drift$ is locally null on a subset of the boundary, would require careful analysis but do not raise any additional difficulties compared to the previous case. As a consequence, we will make the following additional assumption: 
        \begin{hypothesis}
            \zcsetup{reftype=hypothesis}
        	\label{hyp:b-boundary-positive}
     		For any $x\in \partial \hypercube\setminus\absorbingstates$,
			\[
				\scalarproduct{\drift(x),\normal(x)}>0.
			\]
        \end{hypothesis}

        \begin{notation}
            For any set $\calS$, we denote by $\hittingtime{\calS} = \inf\set{t\geq 0}[\process_t\in \calS] $ the hitting time of $\calS$ by the process $\process$. If $\calS$ is a singleton $\set{s}$ for some $s\in\hypercube$, we will denote it by $\hittingtime{s}$. 
        \end{notation}
        
        In order to study the extinction time of the diffusion process $\hittingtime{\absorbingstates}$, we will use the description of the process by the SDE corresponding to the differential operator $\generator$,
        \begin{equation}
        	\zcsetup{reftype=sde}
        	\tag{$E_{\dispersion,\drift}$}
            \label{sde:L}
            \d X_t = \dispersion\br{X_t}\d t + \drift\br{X_t}\d t,
        \end{equation}
        where $\dispersion$ is the symmetric positive square root of $\diffusion$ defined by
        \begin{equation}
            \label{def:sigma}
            \dispersion(x) =  \diag\br{\sqrt{\frac{x_1(1-x_1)}{\distortion[1]}},\dots, \sqrt{\frac{x_m(1-x_m)}{\distortion[m]}}}
        \end{equation}
        for any $x\in\hypercube$.
        
        Let $x\in \hypercube$. By Kolmogorov's equation, for any function $f\in\continuouscube[2]$, the process
        \[
            \set{f\br{\process_t}-\int_0^t\generator f\br{\process_s}\d s}[\scrB_t,~t\in\reals_+]
        \]
        is a $\proba[x]{}^0$-martingale, hence, the probability $\proba[x]{}^0$ solves the martingale problem for $\generator$ with initial distribution $\delta_x$. Then, by \autocite[Proposition 4.6 of Chapter 5 in][315]{KaratzasShreve1991}, there is a $m$-dimensional Brownian motion $\brownian = \set{\brownian_t}[\scrF_t;~t\in\reals_+]$ defined on an extension $\br{\Omega, \scrF,\proba[x]{}}$ of $\br{\Omega, \scrB,\proba[x]{}^0}$ such that $\br{\br{\process,\brownian},\br{\Omega,\scrF,\proba[x]{}}, \set{\scrF_t}}$ is a weak solution to \zcref{sde:L} with initial distribution $\delta_x$.
        \begin{theorem}[Absorption]
            \zcsetup{reftype=theorem}
            \label{th:hitting-time}
             Assume that the exchange functions $\exchange$ satisfy \zcref{hyp:conservation, hyp:existence-b} (hence, the function $\drift$ defined in \zcref{hyp:existence-b} satisfies \zcref{prop:b-combi, prop:b-absorption}), and furthermore that $\drift$ satisfies \zcref{hyp:b-boundary-positive}. 
            Then for any $x\in\hypercube$,
            \[
                \proba[x]{\hittingtime{\absorbingstates}<\infty}=1.
            \]
        \end{theorem}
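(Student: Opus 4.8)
The plan is to exhibit a Lyapunov function for the hitting time $\hittingtime{\absorbingstates}$: a bounded function $W\colon\hypercube\to\reals_+$ belonging to $\continuous[2]{\hypercube\setminus\absorbingstates}$ with $\generator W\leq-1$ on $\hypercube\setminus\absorbingstates$. Given such a $W$, I apply It\^o's formula to the weak solution of \zcref{sde:L} along the localising times $\tau_n:=\inf\set{t\geq0}[S(\process_t)\br{\Sigma-S(\process_t)}\leq 1/n]\wedge n$, where $S(x):=\scalarproduct{\distortion,x}$ and $\Sigma:=\sum_{i=1}^m\distortion[i]$. Up to $\tau_n$ the process stays within a compact subset of $\hypercube\setminus\absorbingstates$, on which $W$ is twice continuously differentiable, so Dynkin's formula gives $\expect[x]{\tau_n}\leq W(x)-\expect[x]{W(\process_{\tau_n})}\leq\norm[\infty]{W}$; since path-continuity forces $\tau_n\uparrow\hittingtime{\absorbingstates}$, monotone convergence then yields $\expect[x]{\hittingtime{\absorbingstates}}\leq\norm[\infty]{W}<\infty$, and in particular $\proba[x]{\hittingtime{\absorbingstates}<\infty}=1$.

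The construction of $W$ rests on the conservation relation \zcref{prop:b-combi}, which says that the first-order part of $\generator$ annihilates the linear form $S$ (equivalently, $S(\process_t)$ is a bounded martingale). Hence, for $W_0(x):=\sqrt{S(x)}+\sqrt{\Sigma-S(x)}$, a direct computation gives, on $\hypercube\setminus\absorbingstates=\set{x\in\hypercube}[0<S(x)<\Sigma]$,
\[
	\generator W_0(x)=-\frac{g(x)}{8}\br{\frac{1}{S(x)^{3/2}}+\frac{1}{\br{\Sigma-S(x)}^{3/2}}},\qquad g(x):=\sum_{i=1}^m\distortion[i]x_i(1-x_i)\geq 0,
\]
so $W_0$ is continuous and bounded on $\hypercube$, lies in $\continuous[\infty]{\hypercube\setminus\absorbingstates}$, and satisfies $\generator W_0\leq 0$ there. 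Because $g(x)\geq\tfrac12 S(x)$ whenever all $x_i\leq\tfrac12$, one obtains $\generator W_0(x)\leq-\tfrac1{16}S(x)^{-1/2}$ for $x$ near $\zero$, and by the obvious symmetry near $\one$, so $\generator W_0\leq-1$ on some neighbourhood $U$ of $\absorbingstates$. Moreover $g$ vanishes exactly at the vertices of $\hypercube$, so on any compact subset of $\hypercube$ avoiding all vertices $g$ is bounded below while $S$ stays in a compact subinterval of $(0,\Sigma)$; thus $\generator W_0$ is bounded above there by a strictly negative constant.

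The single defect is that $\generator W_0$ vanishes at the \emph{mixed} vertices $v\in\set{0,1}^m\setminus\absorbingstates$, and it is here that \zcref{hyp:b-boundary-positive} enters. For such $v$ the linear function $\psi_v(x):=\sum_{i\in\indices[0](v)}x_i+\sum_{i\in\indices[1](v)}(1-x_i)$ satisfies $\generator\psi_v(x)=\scalarproduct{\drift(x),\normal(v)}$, which at $x=v$ equals $\scalarproduct{\drift(v),\normal(v)}>0$, hence $\generator\psi_v\geq2\delta>0$ on a ball $B_v:=B(v,2r)$. Choose $r$ so small that the $B_v$ are pairwise disjoint, contain no vertex other than $v$, and are disjoint from $U$; fix cut-offs $\chi_v\in\continuous[\infty]{\hypercube}$ equal to $1$ on $B(v,r)$ and supported in $B_v$; and set $W:=W_0-\varepsilon\sum_{v\text{ mixed}}\psi_v\chi_v$. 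Then: on $B(v,r)$, $\generator W\leq\generator W_0-\varepsilon\generator\psi_v\leq-2\varepsilon\delta$; on the annulus $B_v\setminus B(v,r)$, $\generator W_0$ is already bounded above by a negative constant (no vertex there) while $\abs{\generator\br{\psi_v\chi_v}}$ is bounded, so the correction is absorbed once $\varepsilon$ is small; and off $\bigcup_v B_v$ one has $W=W_0$, with $\generator W\leq-1$ on $U$ and $\generator W$ bounded above by a negative constant on the remaining, vertex-free, compact region. Hence $\generator W\leq-c<0$ throughout $\hypercube\setminus\absorbingstates$; replacing $W$ by $\br{W-\min_{\hypercube}W}/c$ gives the function required in the first paragraph.

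The main obstacle is this Lyapunov construction, and within it the estimate on the annuli $B_v\setminus B(v,r)$: one has to make sure that the error generated by differentiating the cut-offs $\chi_v$ is dominated by the genuinely negative value of $\generator W_0$ there, which fixes the order in which $r$ and $\varepsilon$ must be chosen. That annular estimate is the only place where the \emph{strict} inequality of \zcref{hyp:b-boundary-positive} is needed (the weak \zcref{prop:b-boundary-non-negative} alone would leave $\generator W$ merely $\leq 0$ near a mixed vertex); the identity for $\generator W_0$, the choice of $\tau_n$, and the It\^o/Dynkin bookkeeping are otherwise routine.
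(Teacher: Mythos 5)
Your proof is correct, and it takes a genuinely different route from the paper's. The paper argues in two stages: first, using \zcref{prop:b-combi}, the scalar process $\distortioncombi(\process)$ is a bounded martingale, and an entropy-type concave function plus optional stopping show (\zcref{lem:Delta-absorption}) that from the region $\absorbzone$ near the corners the process hits $\zero$ or $\one$ with probability at least $1/2$; second, it proves that $\absorbzone$ is reached almost surely from every starting point by an induction on the codimension of the face containing $x$ (interior Dirichlet problems and the maximum principle where $\generator$ is uniformly elliptic, and the auxiliary function $N_x$ together with \zcref{hyp:b-boundary-positive} to leave a face), combined with lower semicontinuity of $h_{\alpha}$ (\zcref{lem:h-lsc}), a supermartingale minimum argument (\zcref{lem:h-positive}), and finally the strong Markov property and the martingale $h(\process)$ to pass from positive probability to probability one. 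You instead build a single global Lyapunov function: with $S(x)=\scalarproduct{\distortion,x}$ and $\Sigma=\scalarproduct{\distortion,\one}$, the function $W_0=\sqrt{S}+\sqrt{\Sigma-S}$ has its drift term killed exactly by \zcref{prop:b-combi}, your formula for $\generator W_0$ is correct, and $\generator W_0$ is strictly negative off the vertices and blows up negatively at $\zero$ and $\one$; the only defect is at the mixed vertices, which you repair with the linear functions $\psi_v$ and small cut-offs -- this is the only place \zcref{hyp:b-boundary-positive} is used, and only at the finitely many mixed vertices, a strictly weaker use of the hypothesis than in the paper's face induction. Dynkin's formula along your localising times then yields the stronger conclusion $\expect[x]{\hittingtime{\absorbingstates}}\leq\normC{W}<\infty$ uniformly in $x$, with no PDE input, no semicontinuity lemma and no induction. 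The one step you should write out explicitly is the identification at the end: from $\sup_n\expect[x]{\tau_n}<\infty$ you get $\tau_\infty=\lim_n\tau_n<\infty$ a.s., and path continuity gives $S(\process_{\tau_\infty})\br{\Sigma-S(\process_{\tau_\infty})}=0$, hence $\process_{\tau_\infty}\in\absorbingstates$ and $\hittingtime{\absorbingstates}\leq\tau_\infty$; together with $\tau_n\leq\hittingtime{\absorbingstates}$ this justifies your assertion that $\tau_n\uparrow\hittingtime{\absorbingstates}$ -- routine, but worth a line.
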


        \begin{remark}
            The models and results can be generalised to cover any finite number of species, provided that the drift properties are adapted accordingly. A proof of the existence of a Feller semigroup would be based on this work. The fixation time, or the time at which all species except one become extinct, is also finite almost surely, and this can be proven essentially by using recurrence on the number of species and methods employed in \zcref{sec:extinction}. To avoid making this paper too lengthy, we have chosen to present the results for only two species.
        \end{remark}

\section{Construction of a Diffusion Feller Semigroup}

    The proof of \zcref{th:L-feller} developed in this section follows the method expanded in \autocite{Ethier1976}.
    We split the operator $\generator$ into two parts,
    \begin{equation}
        \label{def:A}
        \diffop = \frac{1}{2}\sum_{i=1}^m \frac{x_i(1-x_i)}{\distortion[i]}\pd{!}{x_i,x_i},~\domain{\diffop}=\continuouscube[2]
    \end{equation}
    and 
    \begin{equation}
        \label{def:B}
        \driftop = \sum_{i=1}^m \drift[i](x)\pd{!}{x_i},~\domain{\driftop}=\continuouscube[1].
    \end{equation}
    To apply the Trotter-Kato product formula result from \autocite[Exercise 9 of Chapter 1 in][44]{ethierkurtz:86:markov}, we must show that the closures of $\diffop$ and $\driftop$ both generate a strongly continuous semigroup, but we also need these semigroups to satisfy a good estimate on $\continuous[2]{}$. The following notations must be introduced for the upcoming constructions: 
    \begin{notation}\leavevmode
        \begin{itemize}
            \item For any multi-index $\alpha = (\alpha_1,\dots, \alpha_m)$, we denote by $\abs{\alpha}$ the sum $\alpha_1+\cdots+\alpha_m$, by $\partial^{\alpha}$ the partial derivative $\partial_1^{\alpha_1}\cdots \partial_m^{\alpha_m}$.
            \item For any positive integers $n$, we denote by $R_n$ the set of multi-indices $\set{\alpha\in\nintegers^m}[\abs{\alpha} \leq n]$ and by $R_n^*$ the set $R_n\setminus\set{\zero}$.
            \item We write $\borelfun{\hypercube}$ for the space of bounded Borel measurable functions $f:\hypercube\rightarrow\reals$, endowed with the sup-norm $\normC{f} = \sup_{x\in \hypercube}\abs{f(x)}$. For any $k\in\nintegers$, we write $\normC[k]{f} = \sum_{0\leq \abs{\alpha}\leq k}\normC{f}$ and $\seminormC{k}{f} = \sum_{1\leq \abs{\alpha}\leq k}\normC{f}$ for the respective norm and seminorm on $\continuouscube[k]$.
            \item For any positive integers $n$ and $k$, we write $\continuous[n,k]{\reals_+\times\hypercube}$ for the space of functions $\fun[f]{\reals_+\times\hypercube}{\reals}$ that are of class $\continuous[n]{}$ with respect to the first variable and of class $\continuous[k]{}$ with respect to the second variable.
        \end{itemize}
         
    \end{notation}
    \subsection{Construction of a Feller Semigroup from \texorpdfstring{$\diffop$}{A}}
        \begin{lemma}
            \zcsetup{reftype=lemma}
            \label{lem:A-estimate}
            Let $u\in\continuous[1,4]{\reals_+\times \hypercube}$ be a function satisfying
            \begin{equation}
        		\label{eq:A-evolution}
        		\partial_tu = \diffop u.
        	\end{equation}
            Then, for any $t\in\reals_+$,
            \begin{equation}
        		\zcsetup{reftype=inequality}
        		\label{ineq:A}
        		\seminormC{2}{u(t,\ \cdot )} \leq \seminormC{2}{u(0,\ \cdot )}.
        	\end{equation}
        \end{lemma}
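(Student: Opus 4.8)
The plan is to differentiate the evolution equation $\partial_t u = \diffop u$ in the spatial variables and to run a maximum principle on each derivative, exploiting that the diffusion coefficients of $\diffop$ vanish exactly where the boundary of $\hypercube$ is reached along the corresponding coordinate. Write $a_i(x) = \frac{x_i(1-x_i)}{\distortion[i]}$, so that $\diffop = \tfrac12\sum_{i=1}^m a_i(x)\partial_i^2$, and record $a_i'(x_i) = \frac{1-2x_i}{\distortion[i]}$ and $a_i''(x_i) = -\frac{2}{\distortion[i]}$. Since $u\in\continuous[1,4]{\reals_+\times\hypercube}$, for every multi-index $\alpha$ with $1\le\abs{\alpha}\le 2$ the function $w:=\partial^\alpha u$ lies in $\continuous[1,2]{\reals_+\times\hypercube}$ — the four spatial derivatives are exactly what makes the following differentiation legitimate — and commuting $\partial^\alpha$ past $\diffop$ yields an equation of the form
\[
    \partial_t w = \diffop w + \sum_{i=1}^m b^\alpha_i(x)\,\partial_i w + c^\alpha(x)\,w \qquad \text{on } \reals_+\times\hypercube .
\]
A direct computation of the commutator gives $b^{e_k}_k = \tfrac12 a_k'$, $c^{e_k} = 0$; $b^{2e_k}_k = a_k'$, $c^{2e_k} = \tfrac12 a_k'' = -\tfrac{1}{\distortion[k]}$; and, for $k\ne l$, $b^{e_k+e_l}_k = \tfrac12 a_k'$, $b^{e_k+e_l}_l = \tfrac12 a_l'$, $c^{e_k+e_l}=0$ (all other $b^\alpha_i$ vanish). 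In particular $c^\alpha\le 0$ always, since $a_i''<0$.

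Two structural facts then do all the work. First, each nonzero $b^\alpha_i$ is a non-negative multiple of $a_i'(x_i)=\frac{1-2x_i}{\distortion[i]}$, which equals $\frac{1}{\distortion[i]}>0$ when $x_i=0$ and $-\frac{1}{\distortion[i]}<0$ when $x_i=1$; hence $\scalarproduct{b^\alpha(x),\normal(x)}\ge 0$ for all $x\in\partial\hypercube$, i.e. the transported drift points into $\hypercube$ or is tangent to it. Second, $a_i$ vanishes on $\set{x\in\hypercube}[x_i\in\set{0,1}]$, so $\diffop$ contributes nothing in the $i$-th direction precisely on that part of the boundary. I will combine these into the following degenerate maximum principle: if $w\in\continuous[1,2]{[0,T]\times\hypercube}$ solves $\partial_t w=\diffop w+\sum_i b_i\partial_i w+cw$ with $c\le 0$ and $\scalarproduct{b(\cdot),\normal(\cdot)}\ge 0$ on $\partial\hypercube$, then $\max_{[0,T]\times\hypercube}w\le\max\br{\max_{\hypercube}w(0,\cdot),\,0}$. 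The proof is the usual contradiction argument: if the bound fails, then for small $\epsilon>0$ the function $w-\epsilon t$ still exceeds its maximum at time $0$, hence attains its maximum over the compact set $[0,T]\times\hypercube$ at some $(t_0,x_0)$ with $t_0>0$ and $w(t_0,x_0)>0$; there one has $\partial_t w(t_0,x_0)\ge\epsilon$, while in each coordinate $i$ either $(x_0)_i\in(0,1)$ (so $\partial_i w=0$, $\partial_i^2 w\le 0$, whence $a_i\partial_i^2 w\le 0$ and $b_i\partial_i w=0$) or $(x_0)_i\in\set{0,1}$ (so $a_i((x_0)_i)=0$ kills the second-order term, and the one-sided maximum condition gives $\partial_i w\le 0$ when $(x_0)_i=0$ and $\partial_i w\ge 0$ when $(x_0)_i=1$, hence $b_i(x_0)\partial_i w(t_0,x_0)\le 0$ by the sign of $b_i$ on $\partial\hypercube$); together with $c(x_0)w(t_0,x_0)\le 0$ this forces $\epsilon\le\partial_t w(t_0,x_0)\le 0$, a contradiction.

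Applying this estimate to $w=\partial^\alpha u$ and, by linearity, to $-\partial^\alpha u$ (which satisfies an equation of the same type, with the same inward drift and the same $c\le 0$) gives $\normC{\partial^\alpha u(t,\cdot)}\le\normC{\partial^\alpha u(0,\cdot)}$ for all $t\in[0,T]$ and all $\alpha$ with $1\le\abs{\alpha}\le 2$; since $T$ is arbitrary this holds for every $t\in\reals_+$. Summing over these $\alpha$ yields $\seminormC{2}{u(t,\cdot)}\le\seminormC{2}{u(0,\cdot)}$, which is (\ref{ineq:A}). The one genuinely delicate step is the maximum principle: because $\diffop$ degenerates on all of $\partial\hypercube$ no boundary conditions are available, and the argument closes only because the vanishing locus of each $a_i$ is precisely the face $\set{x\in\hypercube}[x_i\in\set{0,1}]$ while the induced first-order drift $b^\alpha$ points inward there — both consequences of the Wright-Fisher form $a_i(x)=\frac{x_i(1-x_i)}{\distortion[i]}$; without this alignment the boundary terms in the maximum principle could not be controlled.
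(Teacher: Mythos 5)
Your proposal is correct and is essentially the paper's argument: you compute the same commutator (your $b^\alpha_i$, $c^\alpha$ are exactly the first- and zeroth-order terms of the paper's operator $\diffop_\alpha$) and run the same sign analysis at a spatial maximum, using that $\diffusion[i]$ vanishes on the faces $x_i\in\set{0,1}$, that the induced drift points inward there, and that $c^\alpha\leq 0$. The only difference is in the closing step: the paper phrases this sign analysis as the positive maximum principle, i.e.\ dissipativity of $\diffop_\alpha$, and cites an abstract contraction result from Ethier--Kurtz, whereas you prove the sup-norm contraction directly via the $\epsilon t$-perturbed parabolic maximum principle applied to $\pm\partial^{\alpha}u$ — a self-contained but equivalent finish.
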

        \begin{proof}
            Let $u\in\continuous[1,4]{\reals_+\times \hypercube}$ be a function satisfying (\ref{eq:A-evolution}) and $\alpha$ be a multi-index in $R_2^*$. Denoting by $u^{\alpha}$ the function $\partial^{\alpha}u$, we will show that there exists a dissipative operator $\diffop_{\alpha}$ with domain $\domain{\diffop}$ such that for any $t\in\reals_+$,
            \[
                u^{\alpha}(t,\ \cdot) = u^{\alpha}(0,\ \cdot) + \int_0^{t}\diffop_{\alpha}u^{\alpha}(s,\ \cdot)\d s.
            \]
            As a consequence, using \autocite[Proposition 2.10 of Chapter 1 in][15]{ethierkurtz:86:markov}, we will have
            \[
                \normC{u^{\alpha}(t,\ \cdot)}\leq \normC{u^{\alpha}(0,\ \cdot)},
            \]
            then, summing over $\alpha\in R_2^*$, we will obtain \zcref{ineq:A}. 
            
            Since the diffusion coefficients do not depend on time, by hypothesis,
            \[
                \partial_tu^{\alpha} = \partial^{\alpha}\diffop u.
            \]
            Now, 
            \[
                \partial^{\alpha}\diffop = \sum_{i=1}^m \sum_{0\leq \abs{\gamma}\leq \abs{\alpha}}\binom{\alpha}{\gamma} \partial^{\gamma}a_{i}(x)\partial^{\alpha-\gamma}\pd{!}{x_i,x_i} = \br{\diffop + \sum_{i=1}^m\frac{1}{2\distortion[i]}\br{\alpha_i\br{1-2x_i} \pd{!}{x_i} -\alpha_i\br{\alpha_i-1}}}\partial^{\alpha}.
            \]
            Thus, with $\diffop_{\alpha}=\diffop + \sum_{i=1}^m\frac{1}{2\distortion[i]}\br{\alpha_i\br{1-2x_i} \pd{!}{x_i} -\alpha_i\br{\alpha_i-1}}$, we obtain 
            \[
                \partial_tu^{\alpha} = \diffop_{\alpha}u^{\alpha}.
            \]
            It remains to prove that the operator $\diffop_{\alpha}$ is dissipative by verifying that it obeys the positive maximum principle. 
            
            Let $f\in\domain{\diffop}$ and $x\in\hypercube$ be such that $f(x) = \sup_{\hypercube}f\geq 0$. On one hand, for any $i\in\indices(x)$, $\frac{x_i(1-x_i)}{d_i}\pd{f}{x_i,x_i} =0$, $\pd{f}{x_i}(x)$ is non-positive if $i\in\indices[0](x)$ and non-negative if $i\in\indices[1](x)$ (refer to the beginning of \zcref{sec:model} for the notations). On the other hand, for any $j\in\range{1,m}\setminus\indices(x)$, $\pd{f}{x_j}(x) = 0$, and $\pd{f}{x_j,x_j}(x)\leq 0$. Hence, 
            \begin{align*}
                \diffop_{\alpha}f(x) 
                    & = \sum_{i=1}^m \br{\frac{x_i(1-x_i)}{2\distortion[i]} \pd{f}{x_i,x_i}+\frac{1}{2\distortion[i]}\br{\alpha_i\br{1-2x_i} \pd{f}{x_i} -\alpha_i\br{\alpha_i-1}}}\\ 
                    & = \sum_{i\in \indices[0](x)} \frac{\alpha_i}{2\distortion[i]}\underbrace{\pd{f}{x_i}(x)}_{\leq 0} -  \sum_{i\in \indices[1](x)} \frac{\alpha_i}{2\distortion[i]}\underbrace{\pd{f}{x_i}(x)}_{\geq 0} + \sum_{j\in \range{1,m}\setminus\indices(x)}\frac{x_j(1-x_j)}{2\distortion[j]}\underbrace{\pd{f}{x_j,x_j}(x)}_{\leq 0} -\sum_{i=1}^m\alpha_i\br{\alpha_i-1}\\
                    & \leq 0.
            \end{align*}
            As a result, $\diffop_{\alpha}$ obeys the positive maximum principle. The result follows.
        \end{proof}
        
        \begin{lemma}
            \zcsetup{reftype=lemma}
            \label{lem:A-feller}
            The operator $\diffop$ defined by (\ref{def:A}) is closable, and its closure generates a Feller semigroup $\set{\diffsemigroup(t)}[t\geq 0]$ on $\continuouscube$ that satisfies
            \begin{equation}
                \label{U-C2-invariance}
                \diffsemigroup(t):\continuouscube[2]\rightarrow\continuouscube[2]
            \end{equation}
            for any $t\in\reals_+$, and
            \begin{equation}
                \zcsetup{reftype=inequality}
                \label{ineq:U-C2-contraction}
                \seminormC{2}{\diffsemigroup(t)f}\leq \seminormC{2}{f}
            \end{equation}
            for any $t\in\reals_+$ and $f\in\continuouscube[2]$.
        \end{lemma}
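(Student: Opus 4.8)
The plan is to construct the semigroup $\set{\diffsemigroup(t)}[t\geq0]$ by a Hille--Yosida argument on $\continuouscube$ and then to propagate the a priori estimate of \zcref{lem:A-estimate} from polynomials to all of $\continuouscube[2]$. First, $\diffop$ satisfies the positive maximum principle on $\domain{\diffop}=\continuouscube[2]$: this is exactly the special case $\alpha=\zero$ of the computation already carried out in the proof of \zcref{lem:A-estimate}, since at a nonnegative maximum the only surviving second-order terms are those with $x_i\in(0,1)$, which are $\leq 0$, while those with $i\in\indices(x)$ vanish because $x_i(1-x_i)=0$ there. In particular $\diffop$ is dissipative. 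For the range condition, the key observation is that $\diffop$ preserves each space $\polynomials[n]$ of polynomials of degree at most $n$: applying $\tfrac12\tfrac{x_i(1-x_i)}{\distortion[i]}\partial_{x_i}^2$ to a monomial $x^{k}$ produces $\tfrac{k_i(k_i-1)}{2\distortion[i]}\br{x^{k-e_i}-x^{k}}$, so in the monomial basis ordered by total degree, $\lambda-\diffop$ restricted to $\polynomials[n]$ is triangular with diagonal entries $\lambda+\sum_{i}k_i(k_i-1)/(2\distortion[i])$, which are strictly positive for $\lambda>0$. Hence $\lambda-\diffop$ is a bijection of $\polynomials[n]$, so its range contains every polynomial, a dense subset of $\continuouscube$ by the Stone--Weierstrass theorem. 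By the Hille--Yosida theorem in the version for densely defined dissipative operators with dense range (\autocite[Chapter 1]{ethierkurtz:86:markov}), $\diffop$ is closable and $\closure{\diffop}$ generates a strongly continuous contraction semigroup $\set{\diffsemigroup(t)}[t\geq0]$ on $\continuouscube$; positivity follows from the positive maximum principle and conservativeness from $\diffop\1=0$, so $\set{\diffsemigroup(t)}[t\geq0]$ is a Feller semigroup.

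Next I would establish the $\continuouscube[2]$-bound first for polynomials. Since $\polynomials[n]$ is a finite-dimensional $\diffop$-invariant subspace of $\domain{\diffop}$, for $f\in\polynomials[n]$ the finite-dimensional flow $t\mapsto e^{t\diffop|_{\polynomials[n]}}f$ is a smooth $\polynomials[n]$-valued curve solving the abstract Cauchy problem for $\closure{\diffop}$ with initial value $f$, hence by uniqueness of solutions it coincides with $\diffsemigroup(t)f$. In particular $\diffsemigroup(t)$ preserves $\polynomials[n]$, and $u(t,x):=\br{\diffsemigroup(t)f}(x)$ is polynomial in $x$, hence of class $\continuous[1,4]{\reals_+\times\hypercube}$, and satisfies $\partial_tu=\diffop u$. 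Then \zcref{lem:A-estimate} gives $\seminormC{2}{\diffsemigroup(t)f}\leq\seminormC{2}{f}$ for every polynomial $f$ and every $t\in\reals_+$.

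To pass to a general $f\in\continuouscube[2]$, I would use that polynomials are dense in $\continuouscube[2]$ for the norm $\normC[2]{\,\cdot\,}$ (e.g.\ via uniform convergence of Bernstein polynomials together with their partial derivatives up to order two). Choosing polynomials $p_k\to f$ in $\normC[2]{\,\cdot\,}$, the previous step applied to the polynomials $p_k-p_\ell$ yields $\seminormC{2}{\diffsemigroup(t)(p_k-p_\ell)}\leq\seminormC{2}{p_k-p_\ell}\to0$, while $\normC{\diffsemigroup(t)(p_k-p_\ell)}\leq\normC{p_k-p_\ell}\to0$ because $\diffsemigroup(t)$ is a $\continuouscube$-contraction; hence $\br{\diffsemigroup(t)p_k}_k$ is Cauchy in the Banach space $\br{\continuouscube[2],\normC[2]{\,\cdot\,}}$. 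Its limit must agree with $\diffsemigroup(t)f$ in $\continuouscube$, so $\diffsemigroup(t)f\in\continuouscube[2]$, which is (\ref{U-C2-invariance}); and $\seminormC{2}{\diffsemigroup(t)f}=\lim_k\seminormC{2}{\diffsemigroup(t)p_k}\leq\lim_k\seminormC{2}{p_k}=\seminormC{2}{f}$, which is (\ref{ineq:U-C2-contraction}).

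The main obstacle is the regularity mismatch: \zcref{lem:A-estimate} is stated for $\continuous[1,4]{}$ solutions of $\partial_tu=\diffop u$, so it cannot be applied directly to $\diffsemigroup(t)f$ when $f$ is merely of class $\continuous[2]{}$. The device that resolves this is to run the estimate first on the finite-dimensional invariant subspaces $\polynomials[n]$---on which $\diffsemigroup(t)$ is an honest matrix exponential of $\diffop|_{\polynomials[n]}$, so $\diffsemigroup(t)f$ is polynomial and hence as smooth as needed---and then to transfer it to $\continuouscube[2]$ by density, the point being that the a priori seminorm control is precisely what forces the approximating images to be Cauchy in the stronger $\continuouscube[2]$-norm rather than merely bounded. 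A secondary point requiring care is the identification $\diffsemigroup(t)|_{\polynomials[n]}=e^{t\diffop|_{\polynomials[n]}}$, which rests on uniqueness for the Cauchy problem associated with the generator $\closure{\diffop}$.
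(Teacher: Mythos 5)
Your proof is correct, but the construction step takes a genuinely different route from the paper. Where you build the semigroup analytically via Hille--Yosida --- dissipativity from the positive maximum principle (indeed the $\alpha=\zero$ case of the computation in \zcref{lem:A-estimate}) together with the range condition, which you verify by the nice observation that $\diffop x^{k}=\sum_i\tfrac{k_i(k_i-1)}{2\distortion[i]}\br{x^{k-e_i}-x^{k}}$ makes $\lambda-\diffop$ block-triangular and invertible on each $\polynomials[n]$, so its range contains the dense set $\polynomials$ --- the paper instead constructs $\set{\diffsemigroup(t)}[t\geq 0]$ probabilistically: weak existence for the $m$ independent one-dimensional Wright--Fisher SDEs gives a solution $\bfQ_x$ of the martingale problem for $\diffop$, one sets $\diffsemigroup(t)f(x)=\expect[\bfQ_x]{f\br{\process_t}}$, and then identifies $\diffsemigroup(t)$ on $\polynomials[n]$ with the finite-dimensional semigroup $e^{t\diffop_n}$ via a Gr\"onwall argument (your identification uses instead uniqueness for the abstract Cauchy problem of $\closure{\diffop}$; both are sound, and both rely on the same invariance of $\polynomials[n]$ under $\diffop$). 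Your route buys something: closability of $\diffop$, the identification of $\closure{\diffop}$ as the generator, positivity and conservativeness all come packaged with the Hille--Yosida theorem for operators satisfying the positive maximum principle, whereas the paper must argue a posteriori (contraction and positivity from the expectation formula, the core property via \autocite[Proposition 17.9]{kallenberg2002foundations}, and only then closability). The paper's route, in exchange, produces the semigroup directly as the transition function of a diffusion, in the spirit of the rest of the article. From the estimate onward the two arguments coincide: you apply \zcref{lem:A-estimate} to the polynomial-valued, jointly smooth flow on $\polynomials[n]$, then use density of $\polynomials$ in $\br{\continuouscube[2],\normC[2]{}}$ and completeness of that space to get a Cauchy sequence whose limit identifies with $\diffsemigroup(t)f$, exactly as in Steps 2--4 of the paper's proof; your explicit remark that the seminorm bound is what upgrades mere boundedness to the Cauchy property in the $\continuous[2]{}$-norm is precisely the point of the paper's argument as well.
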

        \begin{proof}\leavevmode
            \begin{step}
                \item We construct the semigroup.
        
                Let $x\in\hypercube$. The system of SDEs
                \[
                    Y_t^{(i)} = x_i + \frac{1}{\sqrt{\distortion[i]}}\int_0^t\sqrt{Y_s^{(i)}\br{1-Y_s^{(i)}}}\d B_s^{(i)},~i\in\range{1,m}
                \]
                has a weak solution $\br{\tilde{\process},\brownian, \tilde{\bfQ}_x}$, since every independent SDE has a weak solution (see \autocite[Theorem 6.1.1 of Chapter 6 in][184]{alfonsi2015affine}).
                Consequently, the measure $\bfQ_x = \tilde{\bfQ}_x\tilde{\process}^{-1}$ solves the martingale problem for $\diffop$ with initial distribution $\delta_x$. Define a family $\set{\diffsemigroup(t)}[t\geq 0]$ of linear operators $\continuouscube\rightarrow \boundedcube$ by 
                \begin{equation}
                    \label{def:U}
                    \diffsemigroup(t)f(x) = \expect[\bfQ_x]{f\br{\process_t}}
                \end{equation}
                for any $f\in\continuouscube$ and $x\in\hypercube$.
    
                \item We prove that $\set{\diffsemigroup(t)}[t\geq 0]$ induces a strongly continuous semigroup on the space $\polynomials$ of polynomial functions on $\hypercube$. 

                As the measure $\bfQ_x$ solves the martingale problem for $\diffop$, for any $t\in\reals_+$ and any function $f\in\continuouscube[2]$,
                \begin{equation}
                    \label{eq:A-generator}
                    \diffsemigroup(t)f(x) = f(x) + \int_0^t \diffsemigroup(s)\diffop f(x)\d s = f(x) + \int_0^t \diffop\diffsemigroup(s) f(x)\d s.
                \end{equation}
                Let $\polynomials[n]$ be the set of polynomial functions on $\hypercube$ of degree at most $n$. We observe that $\diffop$ induces a bounded linear operator $\diffop_n$ on $\polynomials[n]$. Let $\set{\diffsemigroup_n(t)}[t\geq 0]$ be the semigroup generated by $\diffop_n$ on $\polynomials[n]$, which is strongly continuous.  For any polynomial function $f\in\polynomials[n]$ and any $t\in\reals_+$, 
                \begin{multline*}
                    \normC{\diffsemigroup(t)f-\diffsemigroup_n(t)f} \leq \int_0^t\normC{\diffop\diffsemigroup(s)f-\diffop_n \diffsemigroup_n(s)f}\d s = \int_0^t\normC{\diffop_n\br{\diffsemigroup(s)f-\diffsemigroup_n(s)f}}\d s\\ 
                    \leq \norm[\polynomials[n]]{\diffop_n}\int_0^t \normC{\diffsemigroup(s)f-\diffsemigroup_n(s)f}\d s,
                \end{multline*}
                hence, by Grönwall's inequality, $\diffsemigroup(t)f = \diffsemigroup_n(t)f$. 

                As a result, for any $s,t\in\reals_+$,
                \begin{equation}
                    \label{U-polynomials-stable}
                    \diffsemigroup(t):\polynomials \rightarrow\polynomials
                \end{equation}
                and
                \begin{equation}
                    \label{U-polynomials-semigroup}
                    \diffsemigroup(s+t) = \diffsemigroup(s)\diffsemigroup(t) \text{ on }\polynomials,
                \end{equation}
                and for any $f\in\polynomials$,
                \begin{align}
                    \label{U-polynomials-strong-continuous}
                    \lim_{t\downarrow 0}\normC{\diffsemigroup(t) f-f} = 0.
                \end{align}
        
                \item We establish that $\set{\diffsemigroup(t)}[t\geq 0]$ is a Feller semigroup on $\continuouscube$.
                
                Let $f\in\continuouscube$. By the definition of the expected value, for each $t\in\reals_+$, the operator $\diffsemigroup(t)$ is non-negative and
                \begin{equation}
                	\zcsetup{reftype=inequality}
                    \label{ineq:U-contraction}
                    \normC{\diffsemigroup(t)f}\leq \normC{f}.\\
                \end{equation}
                By the density of $\polynomials$ in $\continuouscube$, there exists a sequence of polynomials $\br{f_n}$ such that 
                \[
                    \lim_{n\to\infty}\normC{f_n-f} = 0.
                \]
                Yet, for each $t\in\reals_+$ and $n\in\nintegers$, by (\ref{U-polynomials-stable}), $\diffsemigroup(t)f_n\in\polynomials$, and
                \[
                    \normC{\diffsemigroup(t) f_n-\diffsemigroup(t) f} = \sup_{x\in \hypercube} \abs{\expect[\bfQ_x]{f_n\br{\process_t} - f\br{\process_t}}}\leq \normC{f_n-f}.
                \]
                Hence, 
                \begin{equation}
                    \label{eq:limit-semigroup-polynomials}
                    \lim_{n\to\infty} \normC{\diffsemigroup(t)f_n-\diffsemigroup(t)f} = 0,
                \end{equation}
                and as a result, $\diffsemigroup(t)f\in\continuouscube$. Using (\ref{eq:limit-semigroup-polynomials}), (\ref{U-polynomials-semigroup}) and $(\ref{U-polynomials-strong-continuous})$, we conclude that $\set{\diffsemigroup(t)}[t\geq 0]$ is a Feller semigroup on $\continuouscube$. 
        
                \item We verify that $\diffsemigroup(t):\continuouscube[2]\rightarrow\continuouscube[2]$ for any $t\in\reals_+$.
        
                Let $f\in\continuouscube[2]$. By the density of $\polynomials$ in $\continuouscube[2]$, there exists a sequence of polynomials $\br{f_n}$ such that 
                \[
                    \lim_{n\to\infty}\normC[2]{f_n-f} = 0.
                \]
                For any non-negative integers $n$, using (\ref{U-polynomials-stable}) and \zcref{lem:A-estimate} with $u(t,x) = \diffsemigroup(t)f_n(x)$, we have 
                \begin{equation}
                    \zcsetup{reftype=inequality}
                    \label{ineq:U-estimate-polynomials}
                    \seminormC{2}{\diffsemigroup(t)f_n}\leq \seminormC{2}{f_n}.
                \end{equation}
                Combined with \zcref{ineq:U-contraction}, it turns out that $\br{\diffsemigroup(t)f_n}$ is a Cauchy sequence in the Banach space $\br{\continuouscube[2],\normC[2]{}}$. Consequently, the sequence converges in $\continuouscube[2]$. Due to the uniqueness of the limit, $\diffsemigroup(t)f\in \continuouscube[2]$. Taking the limit in \zcref{ineq:U-estimate-polynomials},  we obtain \zcref{ineq:U-C2-contraction}.   
            \end{step}
            We have constructed a Feller semigroup $\set{\diffsemigroup(t)}[t\geq 0]$ that satisfies (\ref{U-C2-invariance}). By \autocite[Proposition 17.9 of Chapter 17 in][374]{kallenberg2002foundations}, the subset $\continuouscube[2]$ of $\continuouscube$ is a core for the infinitesimal generator of the semigroup $\set{\diffsemigroup(t)}[t\geq 0]$. Furthermore, according to (\ref{eq:A-generator}), the generator coincides with $\diffop$ on $\continuouscube[2]$, so $\diffop$ is closable and its closure is the infinitesimal generator of $\set{\diffsemigroup(t)}[t\geq 0]$.
        \end{proof}

    \subsection{Construction of a Feller Semigroup from \texorpdfstring{$\driftop$}{B}}

        \begin{lemma}
            \zcsetup{reftype=lemma}
            \label{lem:B-estimate}
           Let $v\in\continuous[1,3]{\reals_+\times \hypercube}$ be a function satisfying
            \begin{equation}
        		\label{eq:B-evolution}
        		\partial_tv = \driftop v.
        	\end{equation}
            Then, there is a non-negative number $\lambda$ such that for any $t\in\reals_+$,
            \begin{equation}
        		\zcsetup{reftype=inequality}
        		\label{ineq:B}
        		\seminormC{2}{v(t,\ \cdot )} \leq e^{\lambda t}\seminormC{2}{v(0,\ \cdot )}.
        	\end{equation}
        \end{lemma}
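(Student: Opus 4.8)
The plan is to follow the scheme of the proof of \zcref{lem:A-estimate}: differentiate the evolution equation $\partial_t v=\driftop v$ and control all spatial derivatives of $v$ of order $1$ and $2$ simultaneously. Fix a multi-index $\alpha\in R_2^*$ and write $v^\alpha=\partial^\alpha v$. Since $v\in\continuous[1,3]{\reals_+\times\hypercube}$ and $\drift\in\continuous[2]{\hypercube,\reals^m}$, one checks that $v^\alpha\in\continuous[1,1]{\reals_+\times\hypercube}$, with $\partial_tv^\alpha=\partial^\alpha(\driftop v)$. Expanding by the Leibniz rule and isolating the summand in which no derivative falls on $\drift$, I would write
\[
\partial^\alpha(\driftop v)=\sum_{i=1}^m\sum_{\gamma\leq\alpha}\binom{\alpha}{\gamma}\,\partial^\gamma\drift[i]\;\partial^{\alpha-\gamma}\partial_i v=\driftop v^\alpha+\sum_{\beta\in R_2^*}c_{\alpha\beta}(x)\,v^\beta,
\]
the point being that the remaining summands all have $1\leq|\gamma|\leq|\alpha|\leq 2$, so that each derivative $\partial^{\alpha-\gamma}\partial_i v$ occurring there is again of order in $\set{1,2}$; this is what makes the first-order system close within $R_2^*$. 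The coefficients $c_{\alpha\beta}$ are finite sums of derivatives $\partial^\gamma\drift[i]$ with $|\gamma|\leq 2$, hence continuous and therefore bounded on the compact $\hypercube$.

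Next I would run the positive-maximum-principle computation of \zcref{lem:A-estimate}, this time for the first-order operator $\driftop$. If $x^\ast\in\hypercube$ is a point at which $v^\alpha(t,\ \cdot\ )$ attains its maximum over $\hypercube$, then for every coordinate $i$ either $x^\ast_i\in(0,1)$, whence $\partial_i v^\alpha(t,x^\ast)=0$, or $x^\ast_i\in\set{0,1}$, whence \zcref{prop:b-boundary-non-negative} forces $\drift[i](x^\ast)\,\partial_i v^\alpha(t,x^\ast)\leq 0$ (the one-sided derivative points the right way while $\drift[i]$ has the opposite sign); summing over $i$ gives $\driftop v^\alpha(t,x^\ast)\leq 0$. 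Applying this to both $v^\alpha$ and $-v^\alpha$, together with a Danskin-type envelope estimate for the upper right Dini derivative $D^{+}$ of the locally Lipschitz map $t\mapsto\normC{v^\alpha(t,\ \cdot\ )}=\sup_x\abs{v^\alpha(t,x)}$, I obtain
\[
D^{+}\normC{v^\alpha(t,\ \cdot\ )}\leq\sum_{\beta\in R_2^*}\normC{c_{\alpha\beta}}\;\normC{v^\beta(t,\ \cdot\ )}.
\]
Summing over $\alpha\in R_2^*$ and setting $\phi(t)=\seminormC{2}{v(t,\ \cdot\ )}=\sum_{\alpha\in R_2^*}\normC{v^\alpha(t,\ \cdot\ )}$ yields $D^{+}\phi(t)\leq\lambda\,\phi(t)$ with the non-negative constant $\lambda:=\max_{\beta\in R_2^*}\sum_{\alpha\in R_2^*}\normC{c_{\alpha\beta}}$; since $\phi$ is continuous, the differential form of Gr\"onwall's lemma gives $\phi(t)\leq e^{\lambda t}\phi(0)$, which is \zcref{ineq:B}.

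The main obstacle is precisely the coupling produced by the Leibniz rule: unlike in \zcref{lem:A-estimate}, where $\partial^\alpha\diffop u=\diffop_\alpha u^\alpha$ is a closed equation driven by a single dissipative operator, here every $v^\alpha$ is fed by all the $v^\beta$, $\beta\in R_2^*$, so one cannot merely quote dissipativity of one operator and must instead propagate the maximum-principle estimate through the whole vector $(v^\alpha)_{\alpha\in R_2^*}$ and close it with Gr\"onwall. The second delicate point is that \zcref{prop:b-boundary-non-negative} is genuinely indispensable in the maximum-principle step: it is exactly what makes the boundary contributions $\drift[i](x^\ast)\,\partial_i v^\alpha(t,x^\ast)$ nonpositive, rather than terms involving third-order derivatives of $v$ that the seminorm $\seminormC{2}{\ \cdot\ }$ does not control. (An alternative, avoiding the Dini-derivative bookkeeping, is to note that $\drift\in\continuous[2]{}$ together with \zcref{prop:b-boundary-non-negative} makes $\hypercube$ forward-invariant under the flow $\set{\eta_t}$ of $\drift$, so that $v(t,x)=v(0,\eta_t(x))$; differentiating this composition and bounding $\norm{D\eta_t}$ and $\norm{D^2\eta_t}$ via Gr\"onwall applied to the variational equations also gives \zcref{ineq:B}, with $\lambda$ expressed through $\normC[2]{\drift}$.)
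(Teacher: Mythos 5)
Your proposal is correct and follows essentially the same route as the paper: differentiate the equation, note that the Leibniz rule yields a system for $(v^\alpha)_{\alpha\in R_2^*}$ that closes within orders one and two and is coupled only through coefficients built from $\partial^\gamma\drift[i]$ (hence bounded on $\hypercube$), use \zcref{prop:b-boundary-non-negative} to run the positive maximum principle for $\driftop$ exactly as in \zcref{lem:A-estimate}, and absorb the coupling into the factor $e^{\lambda t}$ with $\lambda$ given by the sup-norms of these coefficients. The only difference is presentational: the paper vectorises the system on $E={\continuouscube}^{R_2^*}$ and gets the bound from dissipativity of $\mathbf{B}+\mathbf{C}-\lambda$ together with the Ethier--Kurtz integral-equation estimate, whereas you close the same inequality by hand via a Dini-derivative and Gr\"onwall argument.
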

        \begin{proof}
            Let $v\in\continuous[1,3]{\reals_+\times \hypercube}$ be a function satisfying (\ref{eq:B-evolution}). Denoting by $v^{\alpha}$ the function $\partial^{\alpha}v$ for any $\alpha\in R_2^*$, we will show that there are a dissipative operator $\bfD$ on the Banach space $\br{E={\continuouscube}^{R_2^*},\norm[E]{}}$, where 
            \[
                \norm[E]{f} = \sum_{\alpha\in R_2^*}\normC{f},
            \]
            and a non-negative number $\lambda$ such that the function $\fun[u]{t}{\br{e^{-\lambda t}v^{\alpha}(t,\ \cdot)}_{\alpha\in R_2^*}}$ satisfies
            \[
                u(t) = u(0) + \int_0^t \bfD u(s)\d s, 
            \]
            for any $t\in\reals_+$. Then, using again \autocite[Proposition 2.10 of Chapter 1 in][15]{ethierkurtz:86:markov}, we will obtain 
            \[
                \norm[E]{u(t)}\leq \norm[E]{u(0)},
            \]
            \ie
            \[
                \seminormC{2}{v(t,\ \cdot)}\leq e^{\lambda t}\seminormC{2}{v(0,\ \cdot)}.
            \]
            
            Since the coefficients do not depend on time, by hypothesis, for any $\alpha \in R_2^*$,
            \[
                \partial_tv^{\alpha} = \partial^{\alpha}\driftop u.
            \]
            Yet, we can write 
            \[
                \partial^{\alpha} \driftop = \br{\driftop+\sum_{\abs{\gamma} = 1}^{\abs{\alpha}}
                c_{\alpha,\gamma}v^{\gamma}}\partial^{\alpha},
            \]
            where $c_{\alpha,\gamma}$ are some functions in $\continuouscube$. Consequently, 
            \begin{equation}
                \label{eq:v-alpha}
                \partial_t v^{\alpha} = \driftop v^{\alpha} + \sum_{ \abs{\gamma} = 1}^{\abs{\alpha}}c_{\alpha,\gamma}v^{\gamma}.
            \end{equation}
            Let $\mathbf{\driftop}:\domain{\mathbf{\driftop}}\rightarrow E$ and $\mathbf{\operator}:E\rightarrow E$ be two linear operators on $E$ defined by 
            \[
                \mathbf{\driftop}(f) = \br{\driftop f_{\alpha}}_{\alpha\in R_2^*},~\domain{\mathbf{\driftop}} = \continuous[1]{\hypercube}^{R_2^*} \text{ and }\mathbf{\operator}(f) = \br{\sum_{\abs{\gamma} = 1}^{\abs{\alpha}} c_{\alpha,\gamma}f_{\alpha}}_{\alpha\in R_2^*}.
            \]
            As a consequence of \autocite[Proposition 3.23 of Chapter 2 in][88]{engel.nagel:00:one-parameter}, the sum of a dissipative operator and an operator that generates a strongly continuous contraction semigroup is dissipative on the intersection of their respective domains. Since the operator $\mathbf{\operator}$ is bounded, it generates a strongly continuous semigroup. To make its semigroup a contraction, it must be multiplied by $e^{-\lambda t}$, where $\lambda$ is a number larger than the norm of the operator $\mathbf{\operator}$. Therefore, in order to apply this result, we must find such a number $\lambda$ and prove that $\mathbf{\driftop}$ is dissipative.
            
            For any $f\in E$,
            \begin{align*}
        		\norm[E]{\mathbf{\operator} f} 
                    & = \sum_{\abs{\alpha} = 1}^2 
                            \normC{
                                \sum_{\abs{\gamma} = 1}^{\abs{\alpha}}
                                    c_{\alpha,\gamma}f_{\gamma}
                            }\\
        		      & \leq \sum_{\abs{\alpha} = 1}^2
                                \sum_{\abs{\gamma} = 1}^{\abs{\alpha}} 
                                    \normC{c_{\alpha,\gamma}}
                                    \normC{f_{\gamma}}\\ 
        		      & = \sum_{\abs{\gamma} = 1}^2
                            \sum_{\abs{\alpha} = \abs{\gamma}}^2
                                \normC{c_{\alpha,\gamma}}
                                \normC{f_{\gamma}}\\ 
        		      & = \br{
                            \max_{1\leq \abs{\gamma'}\leq 2}
                                \br{
                                    \sum_{\abs{\alpha} = \abs{\gamma'}}^2
                                        \normC{c_{\alpha,\gamma'}}
                                }     
                        }
                            \sum_{\abs{\gamma}=1}^2\normC{f_{\gamma}}\\ 
            		  & = \br{
                                \max_{1\leq \abs{\gamma'}\leq 2}
                                    \br{
                                        \sum_{\abs{\alpha} = \abs{\gamma'}}^2
                                            \normC{c_{\alpha,\gamma'}}
                                    }     
                            }
                                \norm[E]{f}.\\ 
        	\end{align*}
            Set $\lambda = 
                \max_{1\leq \abs{\gamma'}\leq 2}
                \br{
                    \sum_{\abs{\alpha} = \abs{\gamma'}}^2
                        \normC{c_{\alpha,\gamma'}}
                }$.
            Then, the operator $\mathbf{C}-\lambda$ generates a strongly continuous contraction semigroup. 
            
            It remains to prove that $\mathbf{\driftop}$ is dissipative. First, let's demonstrate that the operator $\driftop$ is dissipative on $\domain{\driftop}= \continuouscube[1]$ by verifying that it obeys the positive maximum principle. 
    
            Let $f\in \domain{\driftop}$ and $x\in \hypercube$ such that $f(x) = \sup_{\hypercube}f\geq 0$. On one hand, for any $i\in\indices[0](x)$, $\drift[i](x)\geq 0$ and $\pd{f}{x_i}(x)\leq 0$, so the product $\drift_i(x)\pd{f}{x_i}(x)$ is non-positive, and for any $i\in\indices[1](x)$, $\drift[i](x)\leq 0$ and $\pd{f}{x_i}(x)\geq 0$, so the product $\drift[i](x)\pd{f}{x_i}(x)\leq 0$ is also non-positive. On the other hand, for any $j\in\range{1,m}\setminus\indices(x)$, $\pd{f}{x_j} = 0$. Hence, summing over $i\in\indices(x)$ and $j\in\range{1,m}\setminus\indices(x)$, we obtain
            \[
                \driftop f(x)\leq 0.
            \]
            
            Now, let's prove that the operator $\mathbf{\driftop}$ is dissipative. For any function $f\in \domain{\mathbf{\driftop}}$ and real $\gamma >0$, by the dissipativity of the operator $\driftop$,
            \[
                \norm[E]{\gamma f-\mathbf{\driftop}f} = \sum_{1\leq\abs{\alpha}\leq 2}\normC{\gamma f_{\alpha} - \driftop f_{\alpha}}\geq \sum_{1\leq\abs{\alpha}\leq 2}\gamma\normC{f_{\alpha}} = \gamma\norm[E]{f}.
            \]
            All things considered, the operator $\mathbf{\driftop}+\mathbf{\operator}-\lambda$ is dissipative, and the function $\fun[u]{t}{\br{e^{-\lambda t}v^{\alpha}(t,\ \cdot)}_{\alpha\in R_2^*}}$ satisfies
            \[
                u(t) = u(0) + \int_0^t \br{\mathbf{\driftop}+\mathbf{\operator}-\lambda}u(s)\d s
            \]
            for any $t\in\reals_+$, and the result follows.
        \end{proof}
        
        \begin{lemma}
            \zcsetup{reftype=lemma}
            \label{lem:B-feller}
            The operator $\driftop$ defined by (\ref{def:B}) is closable, and its closure generates a Feller semigroup $\set{\driftsemigroup(t)}[t\geq 0]$ on $\continuouscube$ that satisfies
            \begin{equation}
                \label{V-C2}
                \driftsemigroup(t):\continuouscube[2]\rightarrow\continuouscube[2]
            \end{equation}
            for any $t\in\reals_+$, and
            \begin{equation}
                \zcsetup{reftype=inequality}
                \label{ineq:V-C2-contraction}
                \seminormC{2}{\driftsemigroup(t)f}\leq \seminormC{2}{f}
            \end{equation}
            for any $t\in\reals_+$ and $f\in\continuouscube[2]$.
        \end{lemma}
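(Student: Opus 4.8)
The plan is to realise $\set{\driftsemigroup(t)}[t\geq 0]$ as the transport (flow) semigroup of the vector field $\drift$, following the scheme of the proof of \zcref{lem:A-feller} but with the diffusion replaced by a deterministic dynamics. Since $\drift\in\continuous[2]{\hypercube,\reals^m}$ is Lipschitz on the compact set $\hypercube$ and, by \zcref{prop:b-boundary-non-negative}, $\scalarproduct{\drift(x),\normal(x)}\geq 0$ for every $x\in\partial\hypercube$ — which is exactly the condition that $\drift$ be subtangent to $\hypercube$ along its boundary — Nagumo's invariance theorem shows that the equation $\partial_t\phi_t(x)=\drift(\phi_t(x))$, $\phi_0(x)=x$, has a solution defined for all $t\geq 0$ that stays in $\hypercube$, and $x\mapsto\phi_t(x)$ is of class $\continuous[2]{}$, with its $x$-derivatives continuous in $(t,x)$, by smooth dependence of ODE solutions on the initial condition. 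Set $\driftsemigroup(t)f=f\circ\phi_t$ for $f\in\continuouscube$.

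First I would verify that $\set{\driftsemigroup(t)}[t\geq 0]$ is a Feller semigroup: the operators $\driftsemigroup(t)$ are linear, positive and contractive for the supremum norm because $\phi_t(\hypercube)\subseteq\hypercube$; they satisfy $\driftsemigroup(s+t)=\driftsemigroup(s)\driftsemigroup(t)$ by the flow identity $\phi_{s+t}=\phi_s\circ\phi_t$; they map $\continuouscube$ into itself by continuity of $\phi_t$; and $\normC{\driftsemigroup(t)f-f}\to 0$ as $t\downarrow 0$ from the uniform continuity of $f$ together with $\sup_{x\in\hypercube}\abs{\phi_t(x)-x}\leq t\,\normC{\drift}$. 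Differentiating $v(t,x)=\driftsemigroup(t)f(x)=f(\phi_t(x))$ and using $D\phi_t(x)\,\drift(x)=\drift(\phi_t(x))$ (obtained by differentiating $\phi_{t+s}=\phi_t\circ\phi_s$ in $s$ at $0$) gives, for $f\in\continuouscube[1]$, both $\partial_tv=\driftop v$ and $\driftop\driftsemigroup(s)f=\driftsemigroup(s)\driftop f$, hence the analogue of the identity used for $\diffop$ in \zcref{lem:A-feller}, namely $\driftsemigroup(t)f=f+\int_0^t\driftsemigroup(s)\driftop f\,\d s$.

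For $f\in\continuouscube[2]$, $\driftsemigroup(t)f=f\circ\phi_t\in\continuouscube[2]$ since $x\mapsto\phi_t(x)$ is of class $\continuous[2]{}$, which gives \zcref{V-C2}. For the seminorm bound I would apply \zcref{lem:B-estimate} to $v=\driftsemigroup(t)f$; the obstruction is that \zcref{lem:B-estimate} demands $v\in\continuous[1,3]{\reals_+\times\hypercube}$, whereas the flow of a merely $\continuous[2]{}$ drift is only $\continuous[2]{}$ in $x$, so I would argue in two stages. If in addition $\drift\in\continuous[3]{\hypercube,\reals^m}$, then $\phi_t$ is $\continuous[3]{}$ in $x$, hence $\driftsemigroup(t)f\in\continuous[1,3]{}$ for every polynomial $f\in\polynomials$, \zcref{lem:B-estimate} yields $\seminormC{2}{\driftsemigroup(t)f}\leq e^{\lambda t}\seminormC{2}{f}$, and by density of $\polynomials$ in the Banach space $\br{\continuouscube[2],\normC[2]{}}$ and completeness of that space — exactly as in the last step of the proof of \zcref{lem:A-feller} — this bound, together with $\continuouscube[2]$-invariance, extends to all $f\in\continuouscube[2]$. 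For a general $\drift\in\continuous[2]{}$ satisfying the hypotheses, I would approximate it in the $\continuous[2]{}$-norm by fields $\drift^{(k)}\in\continuous[3]{\hypercube,\reals^m}$ still satisfying $\scalarproduct{\drift^{(k)}(x),\normal(x)}\geq 0$ on $\partial\hypercube$ — for instance by mollifying a $\continuous[2]{}$-extension of $\drift$ and adding $\varepsilon_k\nu$, where $\nu(x)=\br{1-2x_1,\dots,1-2x_m}$ satisfies $\scalarproduct{\nu(x),\normal(x)}=\card\indices(x)\geq 1$ on $\partial\hypercube$ and $\varepsilon_k\downarrow 0$ is chosen larger than the boundary defect of the mollification. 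The corresponding flow semigroups obey the seminorm bound with constants $\lambda_k$ that stay bounded (they depend continuously on $\normC[2]{\drift^{(k)}}$), and by continuous dependence of ODE flows on the vector field in $\continuous[2]{}$, uniformly for $t$ in compact sets, letting $k\to\infty$ transfers the $\continuous[2]{}$-invariance, the seminorm estimate, and the semigroup identities to $\drift$.

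Finally, since $\continuouscube[2]$ is dense in $\continuouscube$ and $\driftsemigroup(t)$-invariant, \autocite[Proposition 17.9 of Chapter 17 in][374]{kallenberg2002foundations} makes it a core for the generator of $\set{\driftsemigroup(t)}[t\geq 0]$, which by the identity $\driftsemigroup(t)f=f+\int_0^t\driftsemigroup(s)\driftop f\,\d s$ coincides with $\driftop$ on $\continuouscube[1]$; hence $\driftop$ is closable and $\closure{\driftop}$ generates $\set{\driftsemigroup(t)}[t\geq 0]$. I expect the main obstacle to be the regularity gap in the third step — \zcref{lem:B-estimate} needs more smoothness of $v$ than the flow of a $\continuous[2]{}$ drift provides — which forces the double approximation (polynomials for $f$, $\continuous[3]{}$ fields for $\drift$), the delicate point being to keep the approximating drifts (weakly) inward-pointing on $\partial\hypercube$ so that the associated flows, hence the semigroups on $\continuouscube$, remain well defined, all while keeping the constants $\lambda_k$ uniformly bounded; establishing positive invariance of $\hypercube$ from $\scalarproduct{\drift,\normal}\geq 0$ in the construction step (Nagumo/Bony) is a second, milder, point requiring care.
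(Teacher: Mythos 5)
Your construction is essentially the paper's: you realise $\driftsemigroup(t)f=f\circ\phi_t$ as the composition semigroup of the flow of $\drift$, deduce forward invariance of $\hypercube$ from the inward-pointing property of $\drift$, obtain the $\continuous[2]{}$ bound from \zcref{lem:B-estimate}, and conclude closability and the core property exactly as in the paper; so the route is the same, with two local differences worth recording. For invariance you invoke Nagumo's theorem, while the paper composes $\drift$ with the metric projection $\rho$ onto $\hypercube$, solves the resulting Lipschitz ODE by Picard--Lindel\"of and rules out exit through a face by an integral argument; both rest on the componentwise form of \zcref{prop:b-boundary-non-negative} ($\drift[i]\geq 0$ on $\set{x_i=0}$, $\drift[i]\leq 0$ on $\set{x_i=1}$), and you should quote that form rather than the scalar-product inequality $\scalarproduct{\drift(x),\normal(x)}\geq 0$, which by itself does not give subtangency at edges and corners of the cube. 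The more substantial difference is your treatment of the regularity mismatch: \zcref{lem:B-estimate} requires $v\in\continuous[1,3]{\reals_+\times\hypercube}$, whereas the flow of a $\continuous[2]{}$ drift is only $\continuous[2]{}$ in $x$, so $v(t,\cdot)=\driftsemigroup(t)f$ with $f\in\continuouscube[2]$ is only $\continuous[1,2]{}$; the paper applies the lemma to this $v$ without comment, while you bridge the gap by a double approximation ($\continuous[3]{}$ drifts kept inward by adding $\varepsilon_k(1-2x_i)$, then density of polynomials in $\br{\continuouscube[2],\normC[2]{}}$), keeping the constants $\lambda_k$ bounded by the $\continuous[2]{}$ norms. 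That extra layer is sound — the only tedious point, convergence of the first and second spatial derivatives of the approximating flows, follows from the variational equations and Gr\"onwall — and it actually makes this step more rigorous than the printed proof. Note finally that what your argument yields, like the paper's own, is $\seminormC{2}{\driftsemigroup(t)f}\leq e^{\lambda t}\seminormC{2}{f}$; the factor $e^{\lambda t}$ is absent from the lemma's display as stated, but it is the estimate the proof of \zcref{th:L-feller} actually uses, so your conclusion matches the intended statement.
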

        
        \begin{proof}
            Consider the function 
            \[
                \fun[\rho]{y}{\argmin_{z\in \hypercube}\abs{y-z}}[\reals^m][\hypercube].
            \]
            Since the function $\drift$ is in $\continuous[2]{\hypercube,\reals^m}$, $\drift \circ \rho$ is Lipschitz continuous on $\reals^m$. By Picard-Lindelöf theorem, for any $x\in \hypercube$, there is a unique solution $y:\reals_+\times \reals^m\rightarrow\reals^m$ to the initial value problem
            \[
                \begin{cases}
                    \dot{u} = \drift\circ\rho(u) &\text{in } \reals_+^*\times\reals^m\\ 
                    u(0,x) = x & \text{for any }x\in \reals^m.
                \end{cases}
            \]
            Define the sets 
            \[
                H_i(0) = \set{x\in\reals^m}[x_i < 0] \text{ and } H_i(1) = \set{x\in\reals^m}[x_i > 1],
            \]
            for each $i\in\range{1,m}$. Fix $x\in\hypercube$ and let $y_x= y(\cdot, x)=\br{y_x^{(1)},\dots, y_x^{(m)}}$, where $y$ is the unique solution to the initial value problem. Suppose that there exists $t>0$ such that $y_x(t)\in\reals^m\setminus \hypercube$. Then, there exist $z\in\set{0,1}$ and $i\in\range{1,m}$ such that $y_x(t)\in H_i(z)$. Denote by $\face = \set{y\in\hypercube}[y_i = z]$ and $\normal[\face]$ an inward direction to the face $\face$. Due to the continuity of $y_x$, there exist $0<t_1<t_2$ such that $\scalarproduct{y_x(t_2)-y_x(t_1),\normal[\face]}<0$, and for any $s\in[t_1,t_2]$, $y_x(s)\in H_i(z)$. Nevertheless, we also have 
            \[
                \scalarproduct{y_x(t_2)-y_x(t_1),\normal[\face]} = \int_{t_1}^{t_2}\scalarproduct{\drift\circ\rho(y_x(s)),\normal[\face]}\d s.
            \]
           Provided that $\rho\br{H_i(z)}\subset \face$, for any $s\in [t_1,t_2]$, $\scalarproduct{\drift\circ\rho(y_x(s)),\normal[\face]}\geq 0$, and this leads to a contradiction. Thus, for any $t\in\reals_+$, $y_x(t)\in \hypercube$, and 
            \[
                y_x(t) = x + \int_0^t\drift(y_x(s))\d s.
            \]
            And since the function $\drift$ is in $\continuous[2]{\hypercube,\reals^m}$, then $y\in\continuous[1,2]{\reals_+\times \hypercube,\hypercube}$. Define the family of linear operators $\set{\driftsemigroup(t)}[t\geq 0]$ on $\continuouscube$ by 
            \[
                \driftsemigroup(t)f(x) = f(y(t,x)),~t\in\reals_+,~f\in\continuouscube.
            \]
            By construction, $\set{\driftsemigroup(t)}[t\geq 0]$ is a Feller semigroup on $\continuouscube$ that satisfies (\ref{V-C2}). In particular, $\continuouscube[2]$ is a core for the infinitesimal generator of $\set{\driftsemigroup(t)}[t\geq 0]$. Furthermore,
            \[
                \partial_t\driftsemigroup(t)f = \driftsemigroup(t)\driftop f,~t\in\reals_+,~f\in\continuouscube[1],
            \]
            so its infinitesimal generator coincides with $\driftop$ on $\continuouscube[1]$.
            Finally, for any $f\in\continuouscube[2]$, using \zcref{lem:B-estimate} with $v(t,x) = \driftsemigroup(t)f(x)$, we obtain \zcref{ineq:V-C2-contraction}.
        \end{proof}

    \subsection{Construction of the Feller Semigroup from \texorpdfstring{$\generator$}{L}}
        We are now able to prove \zcref{th:L-feller}.
        \begin{proof}[Proof of \texorpdfstring{\zcref{th:L-feller}}]
            Consider the dense subspace $D = \continuouscube[2]$ of $\continuouscube$, equipped with the norm $\normC[2]{}$, so that $D$ is a Banach space. We have
            \[
                \normC[2]{f}\geq \normC{f}
            \]
            for any $f\in D$. Let $\set{\diffsemigroup(t)}[t\geq 0]$ and $\set{\driftsemigroup(t)}[t\geq 0]$ be the semigroups obtained in \zcref{lem:A-feller, lem:B-feller}, which are generated by $\closure{\diffop}$ and $\closure{\driftop}$ respectively, and satisfy
            \[
                \diffsemigroup(t): D\rightarrow D \text{ and }\driftsemigroup(t): D\rightarrow D
            \]
            for any $t\in\reals_+$. Insofar as $D$ is a subset of the domains of both operators $\diffop$ and $\driftop$, $\closure{\diffop}$ coincides with $\diffop$ on $D$ and likewise for $\closure{\driftop}$ and $\driftop$.
            
            Since the functions $\drift[i]$ are in $\continuouscube[2]$, $D$ is a subset of $\domain{\driftop^2}$. Therefore, there exists a non-negative number $\gamma$ such that for any function $f\in D$,
            \[
                \normC{\driftop^2f}\leq \gamma\normC[2]{f}.
            \]
            Set $\mu = \max(\gamma,\lambda)$, where $\lambda$ is as in \zcref{lem:B-feller}. Then, for any $t\in\reals_+$ and $f\in D$,
            \[
                \normC[2]{\diffsemigroup(t)f}\leq e^{\mu t}\normC[2]{f} \text{ and } \normC[2]{\driftsemigroup(t)f}\leq e^{\mu t}\normC[2]{f}.
            \]
            By applying the result of \autocite[Exercise 9 of Chapter 1 in][44]{ethierkurtz:86:markov}, the closure of the restriction of $\closure{\diffop}+\closure{\driftop}$ generates a Feller semigroup. Yet, the restriction of $\closure{\diffop}+\closure{\driftop}$ to $D$ is $\generator$. Hence, $\closure{\generator}$ generates a Feller semigroup $\set{\semigroup(t)}[t\geq 0]$.
        \end{proof}

\section{Convergence in Distribution}
\zcsetup{reftype=section}
\label{sec:convergence}
    In the previous section, we proved that the closure of the operator $\br{\generator,\continuouscube[2]}$ generates a Feller semigroup $\set{\semigroup(t)}[t\geq 0]$, which means that $\continuouscube[2]$ is a core for its infinitesimal generator. This allows us to establish the convergence in distribution.
    
    \begin{proof}[Proof of \texorpdfstring{\zcref{th:convergence}}]
        Set $D= \continuouscube[2]$. By \zcref{th:L-feller}, the subset $D$ of $\continuouscube$ is a core for the infinitesimal generator of the semigroup $\set{\semigroup(t)}[t\geq 0]$. Using \autocite[Theorem 17.25 of Chapter 17 in][386]{kallenberg2002foundations}, it suffices to prove that for any $f\in D$, 
        \[
            \lim_{N\to\infty}\normC{\generator[N]f-\generator f} = 0.
        \]
         Fix a function $f\in D$. The $m$-dimensional Bernstein operator $\bernstein[N]$ is equal to the product $\bernstein[N_1]^{(1)}\cdots \bernstein[N_m]^{(m)}$, where $\bernstein[N_i]^{(i)}$ is the one dimensional Bernstein operator acting on the $i$-th coordinate,
        \[
            B_N^{(i)}f(x) = \sum_{k=0}^{N_i}\binom{N_i}{k}x_i^{k}\br{1-x_i}^{N_i-k}f\br{x_1,\dots,x_{i-1} \frac{k}{N_i}, x_{i+1},\dots, x_m }.
        \]
        Using the fact that for all $i\in\range{1,m}$,
        \[
            \lim_{N\to\infty} N\br{\bernstein[N_i]^{(i)}f-f}=\frac{x_i(1-x_i)}{2d_i}\pd{f}{x_i,x_i}
        \]
        (see, \eg \autocite[Theorem 3.9 in][206]{bustamante}), we obtain, by a successive application of the operators $\bernstein[N_i]^{(i)}$, that
        \[
            \lim_{N\to\infty} \normC{N\br{B_Nf-f} -\diffop} = 0.
        \]
        By Taylor's inequality, for any integers $N$ and $x\in\hypercube$,
        \begin{align*}
            \abs{f\circ\exchange(x)-f(x) -\scalarproduct{\exchange(x)-x,\nabla f(x)}}\leq \sup_{\hypercube}\norm{H_f}\abs{\exchange(x)-x}^2,
        \end{align*}
        where $H_f$ is the Hessian matrix of $f$. It follows that 
        \begin{align*}
            \abs{N\br{f\circ\exchange(x) -f(x)}-\scalarproduct{b(x),\nabla f(x)} } 
                & \leq \abs{N\br{f\circ\exchange(x) -f(x) - \scalarproduct{\exchange(x)-x, \nabla f(x)}}} \\
                & \phantom{===} + \abs{\scalarproduct{N\br{\exchange(x)-x}, \nabla f(x) }- \scalarproduct{b(x),\nabla f(x)}}\\
                & \leq \sup_{\hypercube}\norm{H_f}N\abs{\exchange\br{x}-x}^2\\ 
                & \phantom{===} + \sup_{\hypercube}\abs{\nabla f}\abs{N\br{\exchange(x)-x}-b(x)}\\ 
                & \leq  \sup_{\hypercube}\norm{H_f}N\sup_{x\in\hypercube}\abs{\br{\exchange(x)-x}}^2 \\
                & \phantom{===} + \sup_{\hypercube}\abs{\nabla f} \sup_{x\in\hypercube}\abs{N\br{\exchange(x)-x}-\drift(x)}.
        \end{align*}
        Thus, by \zcref{hyp:existence-b},
        \[
            \lim_{N\to\infty}\normC{N\br{f\circ\exchange -f}-\driftop f} = 0.
        \]
       Now, by the linearity of the Bernstein operator, 
       \begin{align*}
            \normC{N\br{\transition f-f} -\generator f} 
                & = \normC{ N\br{\bernstein\br{f\circ\exchange}-f}-\generator f}\\ 
                & \leq \normC{\br{\bernstein\br{N\br{f\circ \Phi_N-f} -\driftop f}}} + \normC{\bernstein\br{\driftop f}-\driftop f}\\
                 & \phantom{===} + \normC{N\br{\bernstein f -f}-\diffop f}.
        \end{align*}
        Hence,
        \[
            \lim_{N\to\infty} \normC{\generator[N]f-\generator f} = 0.
        \]
    \end{proof}

\section{Extinction Time}
    \zcsetup{reftype=section}
    \label{sec:extinction}
    Recall that for each $x\in\hypercube$, $\proba[x]{}^0$ is the probability under which the canonical process $\process$ defined on $\Omega=\continuous{\reals_+,\hypercube}$ with its Borel $\sigma$-algebra $\scrB$ is a Feller process with semigroup $\set{\semigroup(t)}[t\geq 0]$ and initial distribution $\delta_x$. Recall also that there is a $m$-dimensional Brownian motion $\brownian = \set{\brownian_t}[\scrF_t;~t\in\reals_+]$ defined on an extension $\br{\Omega, \scrF,\proba[x]{}}$ of $\br{\Omega, \scrB,\proba[x]{}^0}$ such that the triplet $\br{\br{\process,\brownian},\br{\Omega,\scrF,\proba[x]{}}, \set{\scrF_t}}$ is a weak solution to \zcref{sde:L} with initial distribution $\delta_x$. The objective of this section is to provide a proof of \zcref{th:hitting-time}. Recall that $\absorbingstates$ is the set of absorbing states $\set{\zero,\one}$, and that the hitting time of $\absorbingstates$ by the process $\process$ is denoted by $\hittingtime{\absorbingstates}$. We aim to demonstrate that the function $\fun{x}{\proba[x]{\hittingtime{\absorbingstates}<\infty}}$ is constant and is equal to $1$.

    Let's define for any $\alpha\in (0,\min\br{1,\distortioncombi(\one)})$, the sets
    \begin{equation}
        \label{def:Delta}
        \absorbzone^0  = \set{x\in\hypercube}[\distortioncombi(x)<\alpha] \text{ and }\absorbzone^1  = \set{x\in\hypercube}[\distortioncombi(x)>\alpha],
    \end{equation}
    and write $\absorbzone$ for the reunion $\absorbzone^0\cup\absorbzone^1$ (see \zcref{fig:Delta}).
        \begin{figure}[ht]
        \centering
        \begin{tikzpicture}[scale=0.6]

            \coordinate (A) at (0,0);
            \coordinate (B) at (10,0);
            \coordinate (C) at (10,10);
            \coordinate (D) at (0,10);

            \fill[fill=colorDelta] (0,0)--(5,0)--(0,4) node[color=colorDelta!50!black,xshift=8mm,yshift=-16mm] {$\absorbzone^0$};
            \fill[fill=colorDelta] (10,10)--(5,10)--(10,6)node[color=colorDelta!50!black,xshift=-8mm,yshift=16mm] {$\absorbzone^1$};
            \draw[line width=1pt] (A) -- (B);
            \draw[line width=1pt] (B) -- (C);
            \draw[line width=1pt] (C) -- (D);
            \draw[line width=1pt] (D) -- (A);
        
            \draw[line width=1.5pt, color=colorDeltaBound] (0,0) -- (5,0) node[midway, left] {};
            \draw[line width=1.5pt, color=colorDeltaBound] (0,0) -- (0,4) node[midway, left] {};
        
            \draw[line width=1.5pt, color=colorDeltaBound] (10,10) -- (5,10) node[midway, left] {};
            \draw[line width=1.5pt, color=colorDeltaBound] (10,10) -- (10,6) node[midway, left] {};
            
            \filldraw[color=colorAbsorbing] (A) circle (4pt) node[anchor=north east, colorAbsorbing]{$(0,0)$};
            \filldraw[color=colorAbsorbing] (C) circle (4pt) node[anchor=south west, colorAbsorbing]{$(1,1)$};
        \end{tikzpicture}
        \caption{An Example of $\absorbzone^0$ and $\absorbzone^1$ ($m=2$)}
        \label{fig:Delta}
    \end{figure}
    Introduce the functions 
    \begin{equation}
        \label{def:halpha}
        \fun[h_{\alpha}]{x}{\proba[x]{\hittingtime{\absorbzone}<\infty}}[\hypercube][[0,1]]
    \end{equation}
    and
    \begin{equation}
        \label{def:h}
        \fun[h]{x}{\proba[x]{\hittingtime{\absorbingstates}<\infty}}[\hypercube][[0,1]].
    \end{equation}
    We split the proof into two parts. First, we show that starting from any state $x$ of $\absorbzone$, the process will be absorbed within a finite time with a positive probability. In other words, we prove that for any $x\in\absorbzone$, we have $h(x)>0$. Secondly, we prove that starting from any point in $\hypercube$, the process will reach $\absorbzone$ within a finite time with a positive probability, \ie $h_{\alpha}$ is positive. Then, using the fact that this function is lower semicontinuous, we show that its minimum equals $1$. Finally, we use the Markov property to show that the function $h$ is constant and equal to $1$ in $\hypercube$. 
    
    \subsection{Attainability of a Neighbourhood of the Absorbing States}
        The idea behind proving that starting from $\absorbzone$, the process is absorbed within a finite time is to exploit \zcref{prop:b-combi} to reduce the problem to the study of an Itô process in one dimension.
        
        Denote by $\distortionprod$ the constant $\distortion[1]\times \cdots \times \distortion[m]$ and introduce the function 
        \[
            \fun[\distortioncombi]{y}{\frac{1}{\distortionprod}\scalarproduct{\distortion,y}}[\hypercube][\reals_+].
        \]        
        \begin{lemma}
            \zcsetup{reftype=lemma}
            \label{lem:Delta-absorption}
            Let $\alpha$ be a positive number such that $\alpha\leq \min\br{1,\distortioncombi(\one)}$. Let $\absorbzone^0$ and $\absorbzone^1$ be the sets defined in (\ref{def:Delta}).
            Then for any $x\in \absorbzone^0$, 
            \begin{equation}
            	\zcsetup{reftype=inequality}
                \label{ineq:Delta-absorption-0}
                \proba[x]{\hittingtime{\zero}<\infty}\geq 1-\frac{\distortioncombi(x)}{\alpha}>0,
            \end{equation}
            and for any $x\in \absorbzone^1$,
            \begin{equation}
            	\zcsetup{reftype=inequality}
                \label{ineq:Delta-absorption-1}
                \proba[x]{\hittingtime{\one}<\infty}\geq 1-\frac{\distortioncombi(\one-x)}{\alpha}>0.
            \end{equation}
        \end{lemma}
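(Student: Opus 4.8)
\emph{Strategy.} The plan is to project the diffusion onto the single conserved direction $\distortion$ and reduce everything to a one‑dimensional gambler's‑ruin computation. Throughout, fix $x\in\hypercube$ and work on the weak solution $\br{\br{\process,\brownian},\br{\Omega,\scrF,\proba[x]{}},\set{\scrF_t}}$ of \zcref{sde:L} started from $\delta_x$. Since $\distortioncombi$ is linear, Itô's formula together with \zcref{prop:b-combi} — which makes the drift term $\distortionprod^{-1}\scalarproduct{\distortion,\drift(\process_t)}$ vanish — shows that $Z_t:=\distortioncombi(\process_t)$ is a bounded continuous $\proba[x]{}$‑martingale with values in $[0,\distortioncombi(\one)]$ and $\scalarproduct{Z}_t=\distortionprod^{-2}\int_0^t\psi(\process_s)\d s$, where $\psi(y):=\sum_{i=1}^m\distortion[i]y_i(1-y_i)$. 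As all $\distortion[i]>0$ and all coordinates of $\process$ are non‑negative, $Z_t=0\iff\process_t=\zero$, so $\hittingtime{\zero}=\inf\set{t\geq0}[Z_t=0]$; similarly $Z_t=\distortioncombi(\one)\iff\process_t=\one$. For $\alpha'\in(0,\alpha)$ let $\hittingtime{\alpha'}$ denote the hitting time by $\process$ of $\set{y\in\hypercube}[\distortioncombi(y)=\alpha']$, i.e. $\inf\set{t\geq0}[Z_t=\alpha']$.

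\emph{Gambler's ruin, granting finiteness.} Fix $x\in\absorbzone^0$ and $\alpha'\in(\distortioncombi(x),\alpha)$, and suppose for the moment that $\hittingtime{\zero}\wedge\hittingtime{\alpha'}<\infty$ $\proba[x]{}$‑almost surely. Optional stopping for the bounded martingale $Z$ gives $\distortioncombi(x)=\expect[x]{Z_{\hittingtime{\zero}\wedge\hittingtime{\alpha'}}}=\alpha'\,\proba[x]{\hittingtime{\alpha'}<\hittingtime{\zero}}$, since the stopped value is $0$ on $\set{\hittingtime{\zero}<\hittingtime{\alpha'}}$ and $\alpha'$ on $\set{\hittingtime{\alpha'}<\hittingtime{\zero}}$, two events whose union has full measure because $\alpha'>0$. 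Hence $\proba[x]{\hittingtime{\zero}<\infty}\geq\proba[x]{\hittingtime{\zero}<\hittingtime{\alpha'}}=1-\distortioncombi(x)/\alpha'$, and letting $\alpha'\uparrow\alpha$ yields \zcref{ineq:Delta-absorption-0} (the positivity being clear since $\distortioncombi(x)<\alpha$ on $\absorbzone^0$).

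\emph{Finiteness of $\hittingtime{\zero}\wedge\hittingtime{\alpha'}$.} This is the crux, and where the degeneracy of $\generator$ on all of $\partial\hypercube$ bites: a plain martingale argument only shows that $Z$ converges, not that it reaches $0$. I would use a Lyapunov function built from $\distortioncombi$ itself. First, if $\distortioncombi(y)<\alpha'$ then $\distortion[i]y_i\leq\scalarproduct{\distortion,y}=\distortionprod\distortioncombi(y)<\distortionprod\alpha'$, hence $y_i<\alpha'\prod_{k\neq i}\distortion[k]\leq\alpha'<1$; therefore, on $\set{y\in\hypercube}[\distortioncombi(y)<\alpha']\setminus\set{\zero}$,
\[
\frac{\psi(y)}{\distortioncombi(y)}=\distortionprod\br{1-\frac{\sum_i\distortion[i]y_i^{2}}{\sum_i\distortion[i]y_i}}\geq\distortionprod\br{1-\max_iy_i}\geq\distortionprod(1-\alpha')>0.
\]
Let $V:=-g\circ\distortioncombi$, where $g$ is smooth on $(0,\alpha']$, continuous (hence bounded) on $[0,\alpha']$, with $g''(z)=\Lambda/z$. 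Using $\generator\distortioncombi=0$ (again from \zcref{prop:b-combi}), one computes $\generator V(y)=-g''(\distortioncombi(y))\psi(y)/(2\distortionprod^{2})=-\tfrac{\Lambda}{2\distortionprod^{2}}\,\psi(y)/\distortioncombi(y)\leq-1$ on $\set{y\in\hypercube}[\distortioncombi(y)<\alpha']\setminus\set{\zero}$ as soon as $\Lambda\geq2\distortionprod/(1-\alpha')$. Dynkin's formula applied to $V\br{\process_{t\wedge\rho_n}}$ with $\rho_n:=\hittingtime{\alpha'}\wedge\inf\set{t\geq0}[\distortioncombi(\process_t)\leq1/n]$ (a routine localisation, needed only because $V$ fails to be $\continuous[2]{}$ near $\zero$) then yields $\expect[x]{t\wedge\rho_n}\leq V(x)+\sup_{[0,\alpha']}\abs{g}$ uniformly in $t$ and $n$. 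Since $\process$ is continuous, $\rho_n\uparrow\hittingtime{\zero}\wedge\hittingtime{\alpha'}$, so monotone convergence gives $\expect[x]{\hittingtime{\zero}\wedge\hittingtime{\alpha'}}<\infty$, and in particular $\hittingtime{\zero}\wedge\hittingtime{\alpha'}<\infty$ a.s., which is what the previous paragraph needed.

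\emph{The set $\absorbzone^1$, and the main obstacle.} Inequality \zcref{ineq:Delta-absorption-1} is obtained by the symmetric argument run on $\widetilde Z_t:=\distortioncombi(\one-\process_t)=\distortioncombi(\one)-Z_t$, still a bounded martingale (the drift cancels again by \zcref{prop:b-combi}) with $\widetilde Z_t=0\iff\process_t=\one$; here $\one$ takes the place of $\zero$ and $\set{y\in\hypercube}[\distortioncombi(\one-y)<\alpha]$ that of $\absorbzone^0$. The one real difficulty is the finiteness step: upgrading ``the projected martingale converges'' to ``the diffusion is genuinely absorbed at the vertex in finite time'' despite the full degeneracy of $\generator$ on $\partial\hypercube$. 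It goes through only because the diffusion coefficient has exactly the right size near $\zero$, namely $\psi(y)=\distortionprod\distortioncombi(y)+O(\distortioncombi(y)^{2})$, which is precisely matched by the logarithmic blow‑up $g''(z)\propto1/z$ of the Lyapunov function — a milder singularity would not force $t\wedge\rho_n$ to be integrable, a stronger one would make $V$ unbounded.
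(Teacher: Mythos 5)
Your proposal is correct and follows essentially the same route as the paper: project the diffusion onto the conserved functional $\distortioncombi$ (a bounded martingale by \zcref{prop:b-combi}, whose zero set identifies the corner), apply optional stopping for the gambler's-ruin bound, and force a.s.\ finiteness of the exit time with a concave function whose second derivative blows up like $1/z$ at the corner. The only (harmless) implementation difference is in the finiteness step: the paper builds an auxiliary one-dimensional Brownian motion via Lévy's characterisation and applies Itô to the entropy function $r\mapsto -2\br{r\ln r+(1-r)\ln(1-r)}$, using non-negativity of the correction term $Z_s$, whereas you bound $\generator\br{-g\circ\distortioncombi}$ directly through the estimate $\psi(y)/\distortioncombi(y)\geq \distortionprod\br{1-\alpha'}$ and localise away from the corner --- your localisation and the limit $\alpha'\uparrow\alpha$ are, if anything, slightly more careful than the paper's argument.
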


        \begin{proof}
              Fix $x\in\hypercube$. Under the probability $\proba[x]{}$, the process $\br{\process,\brownian}$ is a weak solution to the SDE 
             \begin{equation}
             	\zcsetup{reftype=sde}
                \label{sde:L-x}
                \tag{$E_{\dispersion,\drift}(x)$}
                 \process_t^{(i)} = x_i + \frac{1}{\sqrt{\distortion[i]}}\int_0^t \sqrt{\process_s^{(i)}\br{1-\process_s^{(i)}}}\d \brownian_s^{(i)} + \int_0^t \drift[i]\br{\process_s}\d s,~i\in\range{1,m}.
             \end{equation}
             Set $\processcombi = \distortioncombi(\process)$. For any subset $\calS$ of $\distortioncombi(\hypercube)$, denote by $\hittingtime{\calS}^\processcombi$ the hitting time of $\calS$ by the process $\processcombi$, and by $\hittingtime{s}^\processcombi$ if $\calS=\set{s}$ for some $s\in\distortioncombi(\hypercube)$. 
             
             Let us notice that since the distortion coefficients $\distortion[i]$ are positive, we have $\hittingtime{\zero}^{\processcombi} = \hittingtime{\zero}$ and $\hittingtime{\one}^{\processcombi} = \hittingtime{\one}$. By \zcref{sde:L-x} and \zcref{prop:b-combi}, for all $t\in\reals_+$, under $\proba[x]{}$,
            \[
                \processcombi_t = \distortioncombi(x)+\sum_{i=1}^m\int_0^t\sqrt{\frac{\distortion[i]}{\distortionprod}\frac{{\process_s}^{(i)}\br{1-{\process_s}^{(i)}}}{\distortionprod}}\d \brownian_s^{(i)}.
            \]
            Consider, for all $i\in\range{1,m}$, 
            \[
               \psi_i(x) = \frac{1}{m}\1_{\set{0,1}^m}(x)+ \sqrt{\frac{\distortion[i]x_i(1-x_i)}{\sum_{j=1}^m \distortion[j]x_j(1-x_j)}}\1_{K\setminus\set{0,1}^m}(x),
            \]
            and set, for all $t\in\reals_+$,
            \[
                \browniancombi_t = \sum_{i=1}^m\int_0^t\psi_i\br{\process_s}\d \brownian_s^{(i)},
            \]
            which is well defined because the functions $\psi_i$ are bounded and measurable. By construction, the process $\browniancombi$ is a local martingale. Since the Brownian motions $\brownian^{(i)}$ are independent, for any $t\in\reals_+$,
            \begin{multline*}
                \scalarproduct{\browniancombi}_t =  \sum_{i=1}^m\br{\int_0^t \frac{1}{m}\1_{\set{0,1}^m}(\process_s)\d s + \int_0^t 	\frac{d_i{\process_s}^{(i)}(1-{\process_s}^{(i)}) }{
            \sum_{j=1}^m d_j{\process_s}^{(j)}\br{1-{\process_s}^{(j)}}  }\1_{K\setminus\set{0,1}^m}({\process_s})\d s}\\ 
            = \int_0^t \1_{\set{0,1}^m}(\process_s) + \1_{K\setminus\set{0,1}^m}(\process_s)\d s = t.
            \end{multline*}
            Hence, by Levy's characterisation of Brownian motion, the process $\browniancombi$ is a Brownian motion. Observe that  
            \[
                \sum_{j=1}^m \frac{\distortion[j]}{\distortionprod}{\process_s}^{(j)} \br{1-{\process_s}^{(j)}} = \processcombi_s\br{1-\processcombi_s} + Z_s,
            \]
            where 
            \[
                Z_s = \sum_{i=1}^m\frac{d_i}{\distortionprod }\br{ \frac{\distortion[i]}{\distortionprod}-1}\br{{\process_s}^{(i)}}^2 +2\sum_{1\leq i<j\leq m} \frac{\distortion[i]\distortion[j]}{\distortionprod^2}{\process_s}^{(i)}{\process_s}^{(j)}.
            \]
            Moreover, since for any $i\in\range{1,m}$, we have $\distortion[i]\leq 1$, then $\absorbzone\cap\br{\set{0,1}^m\setminus \absorbingstates}=\varnothing$. Therefore, if $\processcombi_0$ is supported in $(0,\alpha)$, then for any $t\leq \hittingtime{\set{0,\alpha}}^{\processcombi}$, $\br{\processcombi_t,\browniancombi_t}$ satisfies
            \[
                \processcombi_t = \processcombi_0 + \frac{1}{\sqrt{\distortionprod}}\int_0^t\sqrt{\processcombi_s\br{1-\processcombi_s} + Z_s}\d \browniancombi_s,
            \]
            and $\processcombi$ is a martingale and almost a Wright-Fisher diffusion. Suppose that $x\in\absorbzone^0$. Consider the non-negative function
            \[
                \fun[u]{r}{-2\br{ r\ln(r) +(1-r)\ln(1-r)}}[[0,\alpha]][\reals_+].
            \]
            which is of class $\continuous[2]{}$ on $(0,\alpha)$ and we have, for any $r\in (0,\alpha)$,
            \[
                u''(r) = -\frac{2}{r(1-r)}.
            \]
            By Itô's rule, since the function $u''$ is negative on $(0,\alpha)$, the process $Z$ is non-negative, and the martingale part equals $0$ at $t=0$, then for any $t\in\reals_+$,
            \begin{align*}
                \expect[x]{u\br{\processcombi_{t\wedge \hittingtime{\set{0,\alpha}}^{\processcombi} }}} 
                    & = u(\distortioncombi(x)) + \expect[x]{\frac{1}{\distortionprod}\int_0^{t\wedge\hittingtime{\set{0,\alpha}}^{\processcombi}}u''(\processcombi_s)\processcombi_s\br{1-\processcombi_s} +u''(\processcombi_s)Z_s \d s} \\ 
                    & = u(\distortioncombi(x)) -\frac{2}{\distortionprod}\expect[x]{t\wedge\hittingtime{\set{0,\alpha}}^{\processcombi}} + \expect[x]{\frac{1}{\distortionprod}\int_0^{t\wedge \hittingtime{\set{0,\alpha}}^{\processcombi}}u''(\processcombi_s)Z_s\d s}\\
                    & \leq u(\distortioncombi(x))-\frac{2}{\distortionprod}\expect[x]{t\wedge \hittingtime{\set{0,\alpha}}^{\processcombi}}.
            \end{align*}
            Because the function $u$ is non-negative,
            \[
                \expect[x]{t\wedge \hittingtime{\set{0,\alpha}}^{\processcombi}}\leq u\br{\distortioncombi(x)}.
            \]
            By the monotone limit theorem, 
            \[
                \expect[x]{\hittingtime{\set{0,\alpha}}^{\processcombi}}<\infty,
            \]
            and $\proba[x]{\hittingtime{\set{0,\alpha}}^{\processcombi}<\infty}=1$. Hence,
            \[
                \proba[x]{\hittingtime{0}^{\processcombi} =\hittingtime{\alpha}^{\processcombi}=\infty} = 0,
            \]
            and
            \[
                \proba[x]{\hittingtime{0}^{\processcombi} = \infty} = \proba[x]{\hittingtime{0}^{\processcombi}=\infty,~\hittingtime{\alpha}^{\processcombi}<\infty}\leq \proba[x]{\hittingtime{\alpha}^{\processcombi}<\hittingtime{0}^{\processcombi}}.
            \]
            Since $\processcombi$ is a martingale and the stopping time $\hittingtime{\set{0,\alpha}}^{\processcombi}$ is integrable, by the optional stopping theorem, 
            \[
                \distortioncombi(x) = \expect[x]{\processcombi_{\hittingtime{\set{0,\alpha}}^{\processcombi}} } = \alpha\proba[x]{\hittingtime{\alpha}^{\processcombi}<\hittingtime{0}^{\processcombi}}.
            \]
            As a result, 
            \[
                \proba[x]{\hittingtime{\zero}<\infty}>1-\frac{\distortioncombi(x)}{\alpha}.
            \]
            Suppose that $x\in\absorbzone^1$. By noticing that $\absorbzone^1 = \distortioncombi\br{\one- \cdot }^{-1}\br{[0,\alpha)}$, we replace $\processcombi = \distortioncombi(\process)$ with $\distortioncombi(\one-\process)$, $\browniancombi$ with $-\browniancombi$ and $Z_s$ with 
            \[
                Z_s = \sum_{i=1}^m\frac{d_i}{\distortionprod }\br{ \frac{d_i}{\distortion}-1}\br{1-{\process_s}^{(i)}}^2 +2\sum_{1\leq i<j\leq m} \frac{d_id_j}{\distortionprod^2}\br{{1-\process_s}^{(i)}}\br{{1-\process_s}^{(j)}},
            \]
            to obtain, by the same arguments, that
            \[
                \proba[x]{\hittingtime{\one}<\infty}>1-\frac{\distortioncombi(\one-x)}{\alpha}.
            \]
        \end{proof}

    \subsection{Attainability of the Absorbing States}        
       In this subsection, we first prove that starting from any point in $\hypercube$, the process hits the subset $\absorbzone$ with positive probability. Then, we demonstrate that the process hits the absorbing states in finite time almost surely. 
        \begin{notation}
        	For any $t\in\reals_+$, we denote by $\theta_t$ the shift transformation
        	\[
        		\fun[\theta_t]{\omega}{\theta_t(\omega) = \set{\omega_{s+t}}[s\in\reals_+]}[\Omega].
        	\]
        \end{notation}
        \begin{lemma}
            Let $\alpha$ be a positive number such that $\alpha\leq \min\br{1,\distortioncombi(\one)}$. Let $h$ and $h_{\alpha}$ be the functions defined by (\ref{def:halpha}) and (\ref{def:h}) respectively. The processes $h_{\alpha}(\process)=\set{h_{\alpha}(\process_t)}[\scrF_t,~t\in\reals_+]$ and $h(\process) =\set{h(\process_t)}[\scrF_t,~t\in\reals_+]$ are, respectively, a supermartingale and a martingale.
        \end{lemma}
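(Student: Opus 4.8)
The plan is to realise each of $h_{\alpha}(\process_t)$ and $h(\process_t)$ as a conditional expectation given $\scrF_t$ of a shift-transformed indicator, via the Markov property, and then to read off the required (super)martingale property from a monotonicity-in-$t$ statement for the object being conditioned. Adaptedness and integrability are free: $h$ and $h_{\alpha}$ are Borel functions (a standard property of hitting probabilities of the continuous Feller process $\process$ — for $h_{\alpha}$ this is even sharpened to lower semicontinuity later in this section), so $h(\process_t)$ and $h_{\alpha}(\process_t)$ are $\scrF_t$-measurable, and both are $[0,1]$-valued.

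First I would record the relevant path-space facts. For a set $\calS\subseteq\hypercube$ and $t\in\reals_+$, the hitting time of $\calS$ along the shifted trajectory is $\hittingtime{\calS}\circ\theta_t=\inf\set{s\geq 0}[\process_{t+s}\in\calS]$; consequently, for $s\leq t$,
\[
\set{\hittingtime{\calS}\circ\theta_t<\infty}\subseteq\set{\hittingtime{\calS}\circ\theta_s<\infty}\subseteq\set{\hittingtime{\calS}<\infty},
\]
since reaching $\calS$ after time $t$ entails reaching it after time $s$, which entails reaching it at a finite time. Hence $t\mapsto\1_{\set{\hittingtime{\calS}<\infty}}\circ\theta_t$ is non-increasing. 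When $\calS=\absorbingstates$ there is more: because $\zero$ and $\one$ are absorbing (by \zcref{prop:b-absorption} and $\diffusion(\zero)=\diffusion(\one)=0$), if $\hittingtime{\absorbingstates}\leq t$ then $\process_t\in\absorbingstates$ and $\hittingtime{\absorbingstates}\circ\theta_t=0$, so in fact $\set{\hittingtime{\absorbingstates}<\infty}=\set{\hittingtime{\absorbingstates}\circ\theta_t<\infty}$ for every $t$ — that is, $\1_{\set{\hittingtime{\absorbingstates}<\infty}}$ is $\theta_t$-invariant.

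Next I would invoke the Markov property of $\process$ under $\proba[x]{}$ with respect to $\set{\scrF_t}$ (valid since $\process$ solves the well-posed martingale problem for $\closure{\generator}$, so that the additional randomness of the extension $\br{\Omega,\scrF,\proba[x]{}}$ plays no role): for bounded Borel $G$ on $\Omega$, $\expect[x]{G\circ\theta_t}[\scrF_t]=\expect[\process_t]{G}$ a.s. With $G=\1_{\set{\hittingtime{\absorbzone}<\infty}}$, so that $\expect[y]{G}=\proba[y]{\hittingtime{\absorbzone}<\infty}=h_{\alpha}(y)$, this gives $h_{\alpha}(\process_t)=\expect[x]{\1_{\set{\hittingtime{\absorbzone}<\infty}}\circ\theta_t}[\scrF_t]$; with $G=\1_{\set{\hittingtime{\absorbingstates}<\infty}}$, together with the $\theta_t$-invariance above, it gives $h(\process_t)=\expect[x]{\1_{\set{\hittingtime{\absorbingstates}<\infty}}}[\scrF_t]$, which is a bounded (closed) martingale — that settles $h(\process)$. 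For $h_{\alpha}(\process)$, fix $s\leq t$; using the tower property ($\scrF_s\subseteq\scrF_t$), then the monotonicity $\1_{\set{\hittingtime{\absorbzone}<\infty}}\circ\theta_t\leq\1_{\set{\hittingtime{\absorbzone}<\infty}}\circ\theta_s$, then the Markov representation at time $s$,
\[
\expect[x]{h_{\alpha}(\process_t)}[\scrF_s]=\expect[x]{\1_{\set{\hittingtime{\absorbzone}<\infty}}\circ\theta_t}[\scrF_s]\leq\expect[x]{\1_{\set{\hittingtime{\absorbzone}<\infty}}\circ\theta_s}[\scrF_s]=h_{\alpha}(\process_s).
\]
There is no real obstacle; the one point to treat carefully is exactly that $\absorbzone$ is \emph{not} absorbing, which is why $\1_{\set{\hittingtime{\absorbzone}<\infty}}\circ\theta_t$ merely decreases in $t$ instead of staying constant, and hence why $h_{\alpha}(\process)$ is only a supermartingale, whereas the absorption of $\absorbingstates$ promotes the same argument to a genuine martingale for $h(\process)$.
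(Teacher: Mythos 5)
Your proposal is correct and follows essentially the same route as the paper: both express $h(\process_t)$ and $h_{\alpha}(\process_t)$ via the Markov property as conditional expectations of shifted hitting indicators, use the $\theta_t$-invariance of $\1_{\set{\hittingtime{\absorbingstates}<\infty}}$ (from absorption) to get the martingale property, and the monotonicity of $\1_{\set{\hittingtime{\absorbzone}<\infty}}\circ\theta_t$ in $t$ to get the supermartingale property. Your version merely spells out the adaptedness/measurability and the tower-property step a bit more explicitly than the paper does.
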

        \begin{proof}
            First, let us notice that since $\absorbingstates$ is absorbing. then for any $t\in\reals_+$,
            \[
                \1_{\set{\hittingtime{\absorbingstates}<\infty}}\circ \theta_t = \1_{\set{\hittingtime{\absorbingstates}<\infty}},
            \]
            and for any $s,t\in\reals_+$,
            \[
                \1_{\set{\hittingtime{\absorbzone}<\infty}}\circ \theta_s\circ\theta_t \leq \1_{\set{\hittingtime{\absorbzone}<\infty}}\circ \theta_s.
            \]
            By Markov property, for any $s,t\in\reals_+$,
            \begin{align*}
                \expect[x]{h_{\alpha}(\process_{s+t})}[\scrF_s] 
                & = \expect[x]{
                        \expect[\process_{s+t}]{
                            \1_{\set{\hittingtime{\absorbzone}<\infty}}
                            }     
                    }[\scrF_s]\\ 
                & = \expect[x]{
                        \expect[x]{
                            \1_{\set{\hittingtime{\absorbzone}<\infty} }\circ \theta_{s+t}
                        }[\scrF_{s+t}]
                    }[\scrF_s]\\ 
                & \leq \expect[x]{
                        \1_{\set{\hittingtime{\absorbzone}<\infty} }\circ \theta_{s}
                    }[\scrF_s]\\ 
                & = h_{\alpha}\br{X_s}.
            \end{align*}
            and likewise,
            \begin{align*}
                \expect[x]{h(\process_{s+t})}[\scrF_s] 
                & = \expect[x]{
                        \expect[\process_{s+t}]{
                            \1_{\set{\hittingtime{\absorbingstates}<\infty}}
                            }     
                    }[\scrF_s]\\ 
                & = \expect[x]{
                        \1_{\set{\hittingtime{\absorbingstates}<\infty} }\circ \theta_{s}
                    }[\scrF_s]\\ 
                & = \expect[\process_{s}]{
                        \1_{\set{\hittingtime{\absorbingstates}<\infty}}
                }\\ 
                & = h\br{X_s}.
            \end{align*}
            Thus, $h_{\alpha}\br{\process}$ is a supermartingale and $h\br{\process}$ is a martingale.
        \end{proof}
    
    \begin{lemma}
        \zcsetup{reftype=lemma}
        \label{lem:h-lsc}
        Let $\alpha$ be a positive number such that $\alpha\leq \min\br{1,\distortioncombi(\one)}$. 
        The function $h_{\alpha}$ defined by (\ref{def:h}) is lower semicontinuous.
    \end{lemma}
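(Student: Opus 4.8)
The plan is to exhibit $h_\alpha$ as a pointwise supremum of continuous functions, which makes lower semicontinuity automatic. The starting observation is that $\absorbzone$ is an \emph{open} subset of $\hypercube$: the map $\distortioncombi$ is continuous (indeed linear) on $\hypercube$, and $\absorbzone^0=\set{x\in\hypercube}[\distortioncombi(x)<\alpha]$ and $\absorbzone^1=\set{x\in\hypercube}[\distortioncombi(x)>\alpha]$ are preimages under $\distortioncombi$ of open half-lines, so their union $\absorbzone$ is open. Using that the canonical process $\process$ has continuous paths (recall $\Omega=\continuous{\reals_+,\hypercube}$), I would then establish the identity
\[
    \set{\hittingtime{\absorbzone}<\infty}\;=\;\bigcup_{s\in\rationals_{\geq 0}}\set{\process_s\in\absorbzone},
\]
which in particular confirms that the event defining $h_\alpha$ is measurable. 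One inclusion is immediate; for the other, if $\hittingtime{\absorbzone}<\infty$ then $\process_{t^\ast}\in\absorbzone$ for some (not necessarily rational) $t^\ast\geq 0$, and since $\absorbzone$ is open and $s\mapsto\process_s$ is continuous, the set $\set{s\geq 0}[\process_s\in\absorbzone]$ is a non-empty open subset of $\reals_+$, hence meets $\rationals_{\geq 0}$. Enumerating $\rationals_{\geq 0}=\set{s_1,s_2,\dots}$ and putting $f_n(x)=\proba[x]{\process_{s_i}\in\absorbzone\text{ for some }i\in\range{1,n}}$, the corresponding events increase to $\set{\hittingtime{\absorbzone}<\infty}$, so $h_\alpha=\sup_n f_n$ pointwise. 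Since a pointwise supremum of lower semicontinuous functions is lower semicontinuous, it then suffices to prove that each $f_n$ is lower semicontinuous.

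To that end, fix $n$, let $t_1,\dots,t_k$ be the distinct values among $s_1,\dots,s_n$, and set $O=\set{(y_1,\dots,y_k)\in\hypercube^k}[y_i\in\absorbzone\text{ for some }i]$, an open subset of the compact metric space $\hypercube^k$, so that $f_n(x)=\expect[x]{\1_O\br{\process_{t_1},\dots,\process_{t_k}}}$. I would pick continuous functions $\phi_j\colon\hypercube^k\to[0,1]$ with $\phi_j\uparrow\1_O$, for instance $\phi_j=\min\br{1,\,j\,\mathrm{dist}(\,\cdot\,,\hypercube^k\setminus O)}$; by monotone convergence $f_n(x)=\sup_j\expect[x]{\phi_j\br{\process_{t_1},\dots,\process_{t_k}}}$. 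It thus remains to show that $x\mapsto\expect[x]{\phi\br{\process_{t_1},\dots,\process_{t_k}}}$ is continuous on $\hypercube$ for every continuous $\phi$ on $\hypercube^k$; granting this, $f_n$ is a supremum of continuous functions, hence lower semicontinuous.

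This last point is the standard fact that the finite-dimensional distributions of a Feller process depend continuously on the initial state, which I would prove by induction on $k$ from the Markov property: the case $k=1$ is $\expect[x]{\phi(\process_{t_1})}=\semigroup(t_1)\phi(x)\in\continuouscube$ since $\semigroup(t_1)$ maps $\continuouscube$ into itself, and for the inductive step one conditions at time $t_{k-1}$ to get $\expect[x]{\phi\br{\process_{t_1},\dots,\process_{t_k}}}=\expect[x]{\Psi\br{\process_{t_1},\dots,\process_{t_{k-1}}}}$ with $\Psi(y_1,\dots,y_{k-1})=\br{\semigroup(t_k-t_{k-1})\phi(y_1,\dots,y_{k-1},\,\cdot\,)}(y_{k-1})$; here $\Psi$ is continuous because $\semigroup(t_k-t_{k-1})$ is a contraction of $\continuouscube$ into itself, so uniform continuity of $\phi$ transfers to joint continuity in $(y_1,\dots,y_{k-1})$, and the induction hypothesis applies to $\Psi$. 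The only step genuinely requiring care is this reduction to finite-dimensional events together with the observation that $\1_O$ is merely lower semicontinuous — which is precisely why $h_\alpha$ comes out as a supremum, rather than a limit, of continuous functions; everything else is a routine combination of the Feller property and monotone convergence.
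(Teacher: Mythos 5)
Your argument is correct, but it takes a genuinely different route from the paper. The paper works directly on path space: it shows that $\omega\mapsto\1_{\set{\hittingtime{\absorbzone}<\infty}}(\omega)$ is lower semicontinuous on $\Omega=\continuous{\reals_+,\hypercube}$ (using that $\hypercube\setminus\absorbzone$ is closed and paths are continuous), then invokes the continuity of the law in the initial condition — weak convergence of $\proba[x_n]{}$ to $\proba[x]{}$ via the same theorem of Kallenberg used for \zcref{th:convergence} — and concludes with the portmanteau theorem for bounded-below lower semicontinuous functionals. You instead avoid path-space weak convergence altogether: using openness of $\absorbzone$ and path continuity you reduce $\set{\hittingtime{\absorbzone}<\infty}$ to a countable union of finite-dimensional events over rational times (an identity the paper itself uses later, in the proof of \zcref{lem:h-positive}), approximate the indicator of the open set $O\subset\hypercube^k$ from below by continuous functions, and use only the Markov property together with the Feller property $\semigroup(t)\colon\continuouscube\to\continuouscube$ to see that finite-dimensional expectations are continuous in $x$; this exhibits $h_\alpha$ as a pointwise supremum of continuous functions. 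Your route is more elementary and self-contained (no appeal to convergence of processes in \skd/locally uniform topology), at the cost of the finite-dimensional bookkeeping; the paper's route is shorter given that the Kallenberg machinery is already in play, and applies verbatim to any bounded-below lower semicontinuous path functional. Two cosmetic points: in the induction you should order the distinct times $t_1<\cdots<t_k$, and the Markov property should be applied under $\proba[x]{}^0$ (equivalently $\proba[x]{}$, since all the events involved are measurable with respect to the canonical process); neither affects the validity of the argument.
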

    \begin{proof}
        Recall that $\Omega$ is equipped with the topology of uniform convergence on compact subsets of $\reals_+$. 
        Let's first prove that $\omega\mapsto \1_{\set{\hittingtime{\absorbzone}<\infty}}(\omega)$ is lower semicontinuous on $\Omega$, \ie  for any $\gamma \in\reals$, the set $C_{\gamma} = \set{\omega \in \Omega,~\1_{\set{\hittingtime{\absorbzone}<\infty}}(\omega)\leq\gamma}$ is closed. 
        
        Fix $\gamma\in\reals$. If $\gamma\geq 1$, then $C_{\gamma} =\Omega$, and if $\gamma<0$, then $C_{\gamma} = \varnothing$. Suppose that $\gamma\in [0,1)$. Then 
        \[
        	C_{\gamma} = \set{\omega\in \Omega}[\1_{\set{\hittingtime{\absorbzone}<\infty}}(\omega) = 0] = \set{\omega\in \Omega}[\hittingtime{\absorbzone}(\omega)= \infty].
        \]
        Let $\br{\omega_n}_{n\in\nintegers}$ be a sequence of $C_{\gamma}$ converging uniformly on every compact subset of $\reals_+$. Fix $t\in\reals_+$. Since for any $n\in\nintegers$, $\hittingtime{\absorbzone}\br{\omega_n}=\infty$, in particular, $\br{\process_t\br{\omega_n}}_{n\in\nintegers}$ is a sequence of $\hypercube\setminus \absorbzone$, which is closed. Thus taking the limit, $\process_t(\omega)$ belongs to $\hypercube\setminus \absorbzone$. Consequently $\hittingtime{\absorbzone}(\omega) = \infty$, so $\omega\in C_{\gamma}$. Hence, $C_{\gamma}$ is closed.
        
        Now, let $\br{x_n}_{n\in\nintegers}$ be a sequence of $\hypercube$ that converges to $x\in \hypercube$. Consider the sequence of probability measures $\br{\proba[x_n]{}}$, where for any $n\in\nintegers$, $\proba[x_n]{}$ is the probability under which, almost surely, for any $t\in\reals_+$,
        \[
            \process_t = x_n +\int_0^t \dispersion\br{\process_s}\d \brownian_s+\int_0^t\drift\br{\process_s}\d s.
        \] 
        By \autocite[Theorem 17.25 of Chapter 17 in][386]{kallenberg2002foundations}, $\br{\proba[x_n]{}}$ converges weakly to $\proba[x]{}$ with respect to the topology of uniform convergence on all compact sets of $\reals_+$. By portmanteau theorem, as $\1_{\set{\hittingtime{\absorbzone}<\infty}}$ is bounded below and lower semicontinuous on $\Omega$,
        \[
            \liminf_{n\to\infty}\int_{\Omega}\1_{\set{\hittingtime{\absorbzone}<\infty }}\d \proba[x_n]{} \geq \int_{\Omega}\1_{\set{\hittingtime{\absorbzone}<\infty}}\d \proba[x]{},
        \]
        \ie
        \[
            \liminf_{n\to\infty} h_{\alpha}(x_n) \geq h_{\alpha}(x).
        \]
        In other words, the function $h_{\alpha}$ is lower semicontinuous.
    \end{proof}
    
    \begin{lemma}
        \zcsetup{reftype=lemma}
        \label{lem:h-positive}
        The function $h_{\alpha}$ defined by (\ref{def:h}) is constant and is equal to $1$.
    \end{lemma}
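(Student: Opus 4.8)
The plan is to prove the statement in two steps: first, that $h_{\alpha}$ is strictly positive at every point of $\hypercube$; second, that this positivity forces $h_{\alpha}\equiv 1$, using the lower semicontinuity from \zcref{lem:h-lsc} together with a short argument based on the Markov property. The first step is the delicate one.

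To show $h_{\alpha}>0$, fix $x\in\hypercube$ and distinguish three cases. If $x\in\absorbzone$, then $\hittingtime{\absorbzone}=0$ and $h_{\alpha}(x)=1$. If $x\in(0,1)^m\setminus\absorbzone$, note that on the open cube $(0,1)^m$ the coefficients $\dispersion$ and $\drift$ of \zcref{sde:L} are smooth with $\dispersion$ uniformly non-degenerate on compact subsets, and that $\absorbzone^0$ contains points of $(0,1)^m$ with all coordinates arbitrarily small (hence $\distortioncombi<\alpha$); joining $x$ to such a point by a smooth path contained in a compact $K'\subset(0,1)^m$ and applying the Stroock--Varadhan support theorem (with the coefficients smoothly extended off a neighbourhood of $K'$) yields $\proba[x]{\hittingtime{\absorbzone}\le 1}>0$, so $h_{\alpha}(x)>0$. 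Finally, if $x\in\partial\hypercube\setminus\absorbzone$, then $x\notin\absorbingstates$, so $\scalarproduct{\drift(x),\normal(x)}>0$ by \zcref{hyp:b-boundary-positive}; since each term $\drift[i](x)\normal[i](x)$, $i\in\indices(x)$, is non-negative by \zcref{prop:b-boundary-non-negative}, at least one boundary coordinate feels a strictly inward drift, and — using \zcref{hyp:b-boundary-positive} again to exclude locally absorbing faces — one shows that the process enters $(0,1)^m$ with positive probability; applying the strong Markov property at that time reduces to the previous case, so $h_{\alpha}(x)>0$. Carrying out the last case carefully (descending along the faces of $\hypercube$ of increasing codimension until the interior is reached) is the main obstacle.

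For the second step, $\hypercube$ is compact and $h_{\alpha}$ is lower semicontinuous, so $\delta:=\min_{\hypercube}h_{\alpha}$ is attained, and $\delta>0$ by the first step. Fix $x\in\hypercube$ and work under $\proba[x]{}$. By the Markov property, $h_{\alpha}(\process_t)=\proba[x]{D_t}[\scrF_t]$ for each $t$, where $D_t:=\set{\hittingtime{\absorbzone}\circ\theta_t<\infty}=\set{\process_s\in\absorbzone\text{ for some }s\ge t}$. Since the events $D_t$ decrease to $D_{\infty}:=\bigcap_{u\in\nintegers}D_u$, and for each fixed $u$ one has $\proba[x]{D_u}[\scrF_t]\to\1_{D_u}$ a.s.\ as $t\to\infty$ by martingale convergence ($D_u$ being measurable with respect to $\bigvee_s\scrF_s$), it follows that $\limsup_{t\to\infty}h_{\alpha}(\process_t)\le\1_{D_{\infty}}$ a.s. On the event $\set{\hittingtime{\absorbzone}=\infty}$ one has $\1_{D_{\infty}}=0$, while $h_{\alpha}(\process_t)\ge\delta>0$ for all $t$; hence $\proba[x]{\hittingtime{\absorbzone}=\infty}=0$, i.e.\ $h_{\alpha}(x)=1$. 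Since $x$ is arbitrary, $h_{\alpha}\equiv 1$. (The same conclusion also follows from the supermartingale property of $h_{\alpha}(\process)$ and the fact that $h_{\alpha}(\process_{\cdot\wedge\hittingtime{\absorbzone}})$ is a martingale, again using $\delta>0$ and $h_{\alpha}\equiv 1$ on $\absorbzone$.)
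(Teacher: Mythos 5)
Your proposal has a genuine gap, and it sits exactly where you flag ``the main obstacle'': the case $x\in\partial\hypercube\setminus(\absorbzone\cup\absorbingstates)$ is asserted, not proved. On the face $\face(x)$ containing $x$, the diffusion coefficient vanishes identically in the coordinates $i\in\indices(x)$, so those coordinates move only through the drift; \zcref{hyp:b-boundary-positive} only gives that the \emph{sum} $\scalarproduct{\drift(x),\normal(x)}$ is positive, so some of these coordinates may have zero drift at $x$, and nothing in your sketch rules out that the process exits a neighbourhood of $x$ while remaining on a stratum of the same codimension. Saying ``one shows that the process enters $(0,1)^m$ with positive probability'' by ``descending along the faces'' is precisely the statement that must be established, and a support-theorem argument is unavailable there because of the degeneracy. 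The paper closes this gap by induction on $\card\br{\indices(x)}$: it introduces $N_x(y)=\sum_{i\in\indices[0](x)}y_i+\sum_{i\in\indices[1](x)}(1-y_i)$, notes $\generator N_x=\scalarproduct{\drift,\normal(x)}>0$ near $x$, and applies It\^o's formula on a small ball $B(x,\delta)$ to get $\expect[x]{\hittingtime{\partial B}}<\infty$ together with $\expect[x]{N_x\br{\process_{\hittingtime{\partial B}}}}>0$; hence with positive probability the exit point lies at distance at least some $\kappa>0$ from $\face(x)$, i.e.\ on a stratum with strictly fewer boundary coordinates, where positivity of $h_{\alpha}$ is known by the induction hypothesis. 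A uniform lower bound on that exit set (lower semicontinuity of $h_{\alpha}$ on a compact set, \zcref{lem:h-lsc}) plus the supermartingale property and optional stopping then transfer positivity back to $x$. Without this quantitative one-step descent, your proof of $h_{\alpha}>0$ on the boundary does not go through.

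The other two ingredients of your proposal are fine and are legitimate variants of the paper's argument: for interior points, your localization plus Stroock--Varadhan support theorem replaces the paper's Dirichlet-problem/maximum-principle argument on a smooth subdomain $D\subset\mathring{\hypercube}$ meeting $\absorbzone$ (both work, since $\generator$ is uniformly elliptic on compacts of the open cube); and your final step, deducing $h_{\alpha}\equiv 1$ from $\min_{\hypercube}h_{\alpha}>0$ via $h_{\alpha}(\process_t)=\proba[x]{D_t}[\scrF_t]$ and L\'evy's upward theorem, is a correct alternative to the paper's argument at the minimiser $x_{\min}$ using the supermartingale inequality at a time $t_0$ with $\proba[x_{\min}]{\process_{t_0}\in\absorbzone}>0$. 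But both steps rely on the strict positivity of $h_{\alpha}$ \emph{everywhere}, so the missing boundary induction is not a side issue: it is the core of the lemma.
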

    
    \begin{proof}\leavevmode
        \begin{step}
            \item Let's prove by induction on $n\in\range{0,m}$ that for any $x\in \hypercube\setminus\absorbingstates$ such that $\card\br{\indices(x)}\leq n$, $h_{\alpha}(x)>0$ (see \zcref{not:cube}).
    
            \begin{itemize}
                \item $n=0$: Let $x\in\hypercube$ such that $\card\br{\indices(x)}=0$, meaning that $x\in\mathring{\hypercube}$. There exists an open connected subset $D$ of $\hypercube$ with $\continuous[\infty]{}$ boundary such that $D\cap \absorbzone \neq \varnothing$ and such that $\closure{D}\subset \mathring{\hypercube}$.
                \begin{figure}[ht]
                    \centering
                    \begin{tikzpicture}[scale=0.6,thick]
                        \coordinate (A) at (0,0);
                        \coordinate (B) at (10,0);
                        \coordinate (C) at (10,10);
                        \coordinate (D) at (0,10);

                        \fill[fill=colorDelta] (0,0)--(5,0)--(0,4);
                        \fill[fill=colorDelta] (10,10)--(5,10)--(10,6);
                        \draw[line width=1pt] (A) -- (B);
                        \draw[line width=1pt] (B) -- (C);
                        \draw[line width=1pt] (C) -- (D);
                        \draw[line width=1pt] (D) -- (A);
                    
                        \draw[line width=1.5pt, color=colorDeltaBound] (0,0) -- (5,0) node[midway, left] {};
                        \draw[line width=1.5pt, color=colorDeltaBound] (0,0) -- (0,4) node[midway, left] {};
                    
                        \draw[line width=1.5pt, color=colorDeltaBound] (10,10) -- (5,10) node[midway, left] {};
                        \draw[line width=1.5pt, color=colorDeltaBound] (10,10) -- (10,6) node[midway, left] {};

                        \filldraw[colorDomain,fill opacity=0.5] (1.25 ,3) to [ curve through ={(1.55,2)  .. (4,0.8) .. (7,3)..(8,7) .. (5,8) }] (1.25,3);
                        
                        \draw[colorDomainBound] (1.25,3) to [ curve through ={(1.55,2)}](4,0.8) node[anchor=north east]{$\Gamma$};
            
                        \filldraw[color=colorAbsorbing] (A) circle (4pt) node[anchor=north east, colorAbsorbing]{$(0,0)$};
                        \filldraw[color=colorAbsorbing] (C) circle (4pt) node[anchor=south west, colorAbsorbing]{$(1,1)$};
                        \filldraw[color=colorInitialPoint] (4,6) circle (4pt) node[anchor=north west]{$x$};	
                    \end{tikzpicture}
                    \caption{Initial point $x$ with $\card{\indices(x)} = 0$ ($m=2$)}
                    \label{fig:ouvert-dans-interieur}
                \end{figure}
                We consider $\hittingtime{\partial D}$, the hitting time of the boundary $\partial D$ of $D$ by the process $\process$. Since the operator $\generator$ is uniformly elliptic on $D$, the Dirichlet problem 
                \[
                    \begin{cases}
                        \generator u  = -1 &\text{in }D,\\ 
                        u_{|\partial D} = 0 
                    \end{cases}
                \]
                has a unique smooth solution $u_D$, and by the maximum principle, $u_D$ is non-negative in $\closure{D}$. By Itô's rule, for all $x\in D$ and $t\in\reals_+$,
                \[
                    0\leq \expect[x]{u_D\br{t\wedge \hittingtime{\partial D}}} = u_D(x)-\expect[x]{t\wedge \hittingtime{\partial D}},
                \]
                and by monotone convergence, 
                \[
                    \expect[x]{\hittingtime{\partial D}}<\infty.
                \]
                As a result, $\hittingtime{\partial D}<\infty$ $\proba[x]{}$-a.s. Let us denote by $\Gamma$ the subset $\partial D\cap \absorbzone$, and by  $\Gamma^c$ the subset $\partial D \setminus \Gamma$ of $\partial D$. We have that $\hittingtime{\partial D} = \hittingtime{\Gamma}\wedge \hittingtime{\Gamma^c}$ and we want to prove that $\proba[x]{\hittingtime{\Gamma}<\hittingtime{\Gamma^c}}$ is positive.
                Let $\phi\in\continuous[\infty]{\partial D}$ be a $[0,1]$-valued function supported in $\Gamma$ such that there is a non-empty open subset $O$ of $\Gamma$ where $\phi_{|O} = 1$, so that $0\leq \phi\leq \1_{\Gamma}$. Then, the function $v_D$ defined by 
                \[
                    v_D(x) = \expect[x]{\phi\br{\process_{\hittingtime{\partial D}}}}
                \]
                is the unique smooth solution to the Dirichlet Problem
                \[
                \begin{cases}
                    \generator u = 0 & \text{in }D\\ 
                    u_{|\partial D} = \phi.
                \end{cases}
                \]
                Since the function $\phi$ is not identically zero, by the maximum principle, the function $v_D$ is positive in $D$. Now, for all $x\in D$,
                \[
                    \proba[x]{\hittingtime{\Gamma}<\hittingtime{\Gamma^c}} = \expect[x]{\1_{\set{\hittingtime{\Gamma}<\hittingtime{\Gamma^c}}}\1_{\Gamma}\br{\process_{\hittingtime{\partial D}}} }\geq \expect[x]{\1_{\set{\hittingtime{\Gamma}<\hittingtime{\Gamma^c}}} \phi\br{\process_{\hittingtime{\partial D}}}},
                \]
                and because $\phi= 0$ on $\Gamma^c$, 
                \[
                    \expect[x]{\1_{\set{\hittingtime{\Gamma}<\hittingtime{\Gamma^c}}} \phi\br{\process_{\hittingtime{\partial D}}}}= v_D(x)>0.
                \]
                Hence, 
                \[
                    h_{\alpha}(x) = \proba[x]{\hittingtime{\absorbzone}<\infty} \geq \proba[x]{\hittingtime{\Gamma}<\hittingtime{\Gamma^c}}>0.
                \]
                As $\mathring{K}$ is the reunion of all such subsets $D$, for any $x\in\mathring{\hypercube}$, $h_{\alpha}(x)>0$.
                \item Let $n\in\range{1,m-1}$ and suppose that for any $x\in \partial \hypercube\setminus\absorbingstates$ such that $\card\br{\indices(y)} \leq n$, $h_{\alpha}(y)>0$. Let $x\in \partial \hypercube$ be such that $\card{\indices(x)} = n+1$. Denote by $\face(x)$ the lower dimensional face that the point $x$ belongs to, \ie the set of the point $y\in\hypercube$ such that $y_i=0$ if $i\in\indices[0](x)$ and $y_i=1$ if $i\in\indices[1](x)$.
    
                The objective is to demonstrate that the process reaches a lower-dimensional face within a finite time. The recurrence hypothesis can then be applied to draw a conclusion. To do so, we construct a closed ball $B$ of $\reals^m$ such that any point $y$ of the set $B\cap\hypercube$ is either in the face $\face(x)$ or satisfies $\indices(y)\leq n$. The process at the hitting time of the boundary $\partial B$ will be in a lower-dimensional face if and only if it does not belong to the face $\face(x)$. 
                
                In order to characterise belonging to the face $\face(x)$, we introduce the function
                \[
                    \fun[N_x]{y}{\sum_{i\in\indices[0](x)}y_i + \sum_{i\in \indices[1](x)} \br{1-y_i}}[\hypercube][\reals_+],
                \]
                so that, for any $y\in\hypercube$, $N_x(y) = 0$ if and only if $y\in \face(x)$. Let us notice that $\nabla N_x$ is constant and equals $\normal(x)$, that the function $N_x$ belongs to $\continuouscube[2]$ and satisfies 
                \[
                    \generator N_x = \scalarproduct{b,\normal(x)},
                \] 
                and by \zcref{hyp:b-boundary-positive},
                \[
                    \generator N_x(x) >0.
                \]
                Therefore, by the continuity of $\generator N_x$, there exists $\delta>0$ such that 
                \[
                    0<\delta<\frac{1}{2}\min_{i\in\range{1,m}\setminus\indices(x)}\br{x_i,1-x_i}
                \]
                and that for any $y\in\hypercube$ such that $\abs{x-y}\leq\delta$, 
                \[
                    \generator N_x(y)\geq \generator N_x(x)/2.
                \]
                Set $B=B\br{x,\delta}$ the closed ball of $\reals^m$ centred at $x$ with radius $\delta$. Consider $\hittingtime{\partial B}$ the hitting time of $\partial B$ by the process $\process$. We first prove that the process hits $\partial B$ within a finite time almost surely. Then, using $N_x$, we prove that at the stopping time $\hittingtime{\partial B}$, there exists some positive constant $\kappa$ such that the distance between the process $\process_{\hittingtime{\partial B}}$ and the $\face(x)$ is larger than $\kappa$ with a positive probability.
                
                By Itô's rule, since $N_x(x)= 0$ and the martingale part is zero at time $t=0$, then, for any $t\in\reals_+$,
                \[
                    \expect[x]{N_x\br{\process_{t\wedge\hittingtime{\partial B}}}} = \expect[x]{\int_0^{t\wedge \hittingtime{\partial B}}\generator N_x\br{\process_s}\d s} \geq \frac{\generator N_x(x)}{2}\expect[x]{t\wedge \hittingtime{\partial B}}. 
                \]
                Hence, by monotone convergence,
                \begin{equation}
                    \zcsetup{reftype=inequality}
                    \label{ineq:ball}
                    \expect[x]{\hittingtime{\partial B}}\leq \expect[x]{N_x\br{\process_{\hittingtime{\partial B}}}} \leq \frac{2}{\generator N_x(x)}\normC{N_x},
                \end{equation}
                and therefore, $\proba[x]{\hittingtime{\partial B}<\infty} = 1$. 
                
                Moreover, since $\process$ is continuous, $\hittingtime{\partial B}>0$ $\proba[x]{}$-a.s. Yet, 
                \[
                    \set{\hittingtime{\partial B}>0} = \bigcup_{k\in\nintegers^*}\set{\hittingtime{\partial B}\geq \frac{1}{k}},
                \]
                hence,
                \[
                    1 = \proba[x]{\hittingtime{\partial B}>0} \leq \sum_{k=1}^{\infty}\proba[x]{\hittingtime{\partial B}\geq \frac{1}{k}}. 
                \]
                In particular, there exists $k\in\nintegers^*$ such that
                \[
                    \proba[x]{\hittingtime{\partial B}\geq\frac{1}{k}}>0,
                \]
                and as a result,
                \[
                    \expect[x]{\hittingtime{\partial B}}\geq \frac{1}{k} \proba[x]{\hittingtime{\partial B}\geq\frac{1}{k}}>0.
                \]
                By \zcref{ineq:ball},
                \[
                    \expect[x]{N_x\br{\process_{\hittingtime{\partial B}}}}>0.
                \]
                Moreover, denoting $d_{\face(x)}(y) = d\br{y,\face(x)} = \inf_{z\in \face(x)}\abs{y-z}$,
                \[
                \set{N_x\br{\process_{\hittingtime{\partial B}}}>0} = \set{d_{\face(x)}\br{\process_{\hittingtime{\partial B}}} >0} = \bigcup_{k=1}^{\infty}\set{d_{\face(x)}\br{\process_{\hittingtime{\partial B}}} \geq \frac{1}{k}}.
                \]
                In particular, there is a constant $\kappa>0$ such that
                \[
                    \proba[x]{d_{\face(x)}\br{\process_{\hittingtime{\partial B}}} \geq \kappa}>0,
                \]
                \ie 
                \[
					\proba[x]{\process_{\hittingtime{\partial B}}\in \Gamma}>0,
				\]
                where $\Gamma$ denotes the subset $\set{y\in \partial B\cap \hypercube}[d_{\face(x)}(y)\geq \kappa]$ of $\partial B\cap \hypercube$. For any $y\in\Gamma$, on one hand, since $\Gamma\subset \partial B\cap \hypercube$, then we have that for any $j\in\range{1,m}\setminus \indices(x)$, $y_j\notin\set{0,1}$, and on the other hand, since $d_{\face(x)}(y)>0$, there is $i\in\indices(x)$ such that $y_i\notin\set{0,1}$. Thus, for any $y\in \Gamma$, $\card{\indices(y)}\leq n$.
            
                \begin{figure}[ht]
                    \centering
                    \begin{tikzpicture}[scale=0.5]
    
                        \coordinate (A) at (0,0);
                        \coordinate (B) at (10,0);
                        \coordinate (C) at (10,10);
                        \coordinate (D) at (0,10);
                        
                        \filldraw[colorDomain, fill opacity=0.5] (0,4) arc(-90:90:2);
                        \draw[colorDomainBound,very thick] (2*cos{-70},6-2*sin{70}) arc (-70:70:2) node[xshift=10mm,yshift=-8mm] {$\Gamma$};
                        \draw [thick,dash dot] (2*cos{-70},6-2*sin{70}) -- (2*cos{70},6+2*sin{70});
                        \draw[<->] (0,5) -- (2*cos{70},5) node[below, xshift=-1.5mm,font=\scriptsize]{$\kappa$};

                        \draw[line width=1pt] (A) -- (B);
                        \draw[line width=1pt] (B) -- (C);
                        \draw[line width=1pt] (C) -- (D);
                        \draw[line width=1pt] (D) -- (A);
            
                        \fill[fill=colorDelta] (0,0)--(5,0)--(0,4);
                        \fill[fill=colorDelta] (10,10)--(5,10)--(10,6);
                        \draw[line width=1.5pt, color=colorDeltaBound] (0,0) -- (5,0) node[midway, left] {};
                        \draw[line width=1.5pt, color=colorDeltaBound] (0,0) -- (0,4) node[midway, left] {};
                        \draw[line width=1.5pt, color=colorDeltaBound] (10,10) -- (5,10) node[midway, left] {};
                        \draw[line width=1.5pt, color=colorDeltaBound] (10,10) -- (10,6) node[midway, left] {};
            
                        \filldraw[color=colorInitialPoint] (0,6) circle (4pt) node[anchor=east]{$x$};	
                        \filldraw[color=colorAbsorbing] (A) circle (4pt) node[anchor=north east, colorAbsorbing]{$(0,0)$};
                        \filldraw[color=colorAbsorbing] (C) circle (4pt) node[anchor=south west, colorAbsorbing]{$(1,1)$};
                    \end{tikzpicture}
                    \hfill
                    \begin{tikzpicture}[scale=0.5]
                            \coordinate (A) at (0,0);
                        \coordinate (B) at (10,0);
                        \coordinate (C) at (10,10);
                        \coordinate (D) at (0,10);
                        
                        \filldraw[colorDomain, fill opacity=0.5] (7,0) arc(180:90:3) --(B) -- cycle;
                        \draw[colorDomainBound,very thick] (7,0) arc(180:90:3) node[xshift=-12mm] {$\partial B$};
            
                        \draw[line width=1pt] (A) -- (B);
                        \draw[line width=1pt] (B) -- (C);
                        \draw[line width=1pt] (C) -- (D);
                        \draw[line width=1pt] (D) -- (A);
            
                        \fill[fill=colorDelta] (0,0)--(5,0)--(0,4);
                        \fill[fill=colorDelta] (10,10)--(5,10)--(10,6);
                        
                        \draw[line width=1.5pt, color=colorDeltaBound] (0,0) -- (5,0) node[midway, left] {};
                        \draw[line width=1.5pt, color=colorDeltaBound] (0,0) -- (0,4) node[midway, left] {};
                    
                        \draw[line width=1.5pt, color=colorDeltaBound] (10,10) -- (5,10) node[midway, left] {};
                        \draw[line width=1.5pt, color=colorDeltaBound] (10,10) -- (10,6) node[midway, left] {};
                        
                        \filldraw[color=colorInitialPoint] (10,0) circle (4pt) node[anchor=north west]{$x$};	
                        \filldraw[color=colorAbsorbing] (A) circle (4pt) node[anchor=north east, colorAbsorbing]{$(0,0)$};
                        \filldraw[color=colorAbsorbing] (C) circle (4pt) node[anchor=south west, colorAbsorbing]{$(1,1)$};
                    \end{tikzpicture}
                    \caption{Initial point $x$ with $\card{\indices(x)} = 1$ and $\card{\indices(x)} = m$ ($m=2$)}
                    \label{fig:boundary-1}
                \end{figure}
                Because $\Gamma=\partial B\cap \hypercube \cap {d_{\face(x)}}^{-1}\br{[\kappa,+\infty)}$, it is a compact subset of $\hypercube$. \zcref{lem:h-lsc} tells us that the function $h_{\alpha}$ is lower semicontinuous in $\hypercube$, and therefore in $\Gamma$. Thus, $\inf_{\Gamma}h_{\alpha}>0$.
                Since $\hittingtime{\partial B}$ is integrable, by the optional stopping time theorem, 
                \begin{align*}
                        h_{\alpha}(x) 
                        & \geq \expect[x]{
                            h_{\alpha}\br{\process_{\hittingtime{\partial B}}}
                        } \\
                        & \geq \expect[x]{
                            h_{\alpha}\br{\process_{\hittingtime{\partial B}}}\1_{\set{\process_{\hittingtime{\partial B}}\in \Gamma }}
                        } \\
                        & \geq \inf_{\Gamma}h_{\alpha}\proba[x]{\process_{\hittingtime{\partial B}\in \Gamma}}\\
                        & >0.
                \end{align*}
            \end{itemize}
            \item We are now able to demonstrate that the function $h_{\alpha}$ is constant and is equal to $1$.
            
            Since the function $h_{\alpha}$ is lower semicontinuous in the compact set $\hypercube$ and positive, there exists $x_{\min}\in \hypercube$ such that for all $x\in\hypercube$, $0<h_{\alpha}(x_{\min})\leq h_{\alpha}(x)$. By the continuity of $\process$, we have
            \[
                0<\proba[x_{\min}]{\hittingtime{\absorbzone}<\infty} = \proba[x_{\min}]{\exists t\in\rationals,~\process_t\in \absorbzone} \leq \sum_{t\in\rationals}\proba[x_{\min}]{\process_t\in\absorbzone}.
            \]
            In particular, there exists $t_0\in \reals_+$ such that 
            \[
                \proba[x_{\min}]{\process_{t_0}\in \absorbzone}>0.
            \]
            Again, since $h_{\alpha}\br{\process}$ is a supermartingale, then
            \begin{align*}
                h_{\alpha}(x_{\min}) 
                & \geq \expect[x_{\min}]{h_{\alpha}\br{\process_{t_0}}\1_{\set{\process_{t_0}\in \absorbzone}}}+ \expect[x_{\min}]{h_{\alpha}\br{\process_{t_0}}\br{1-\1_{\set{\process_{t_0}\in \absorbzone}}}}\\ 
                & \geq \proba[x_{\min}]{\process_{t_0}\in\absorbzone}+h_{\alpha}(x_{\min})\br{1-\proba[x_{\min}]{\process_{t_0}\in\absorbzone}},
            \end{align*}
            therefore, 
            \[
                h_{\alpha}(x_{\min})\geq 1.
            \]
        \end{step}
         Hence, $h_{\alpha}\equiv 1$.
    \end{proof}
    
    \begin{proof}[Proof of \texorpdfstring{\zcref{th:hitting-time}}]
        Set $\alpha=\min\br{1,\distortioncombi(\one)}$ and fix $x\in \absorbzone[\alpha/2]$. Since $\absorbzone[\alpha/2]\subset \absorbzone$, by \zcref{ineq:Delta-absorption-0} of \zcref{lem:Delta-absorption}, if $x\in\absorbzone^0$, then,
        \[
            \proba[x]{\hittingtime{\zero}<\infty}\geq 1-\frac{\distortioncombi(x)}{\alpha}\geq 1-\frac{\alpha/2}{\alpha}= \frac{1}{2},
        \]
        and likewise, by \zcref{ineq:Delta-absorption-1}, if $x\in\absorbzone^1$, then 
        \[
            \proba[x]{\hittingtime{\zero}<\infty}\geq 1-\frac{\alpha/2}{\alpha}= \frac{1}{2}.
        \]
        Thus, 
        \[
            \proba[x]{\hittingtime{\absorbingstates}<\infty}\geq \frac{1}{2}>0.
        \]
        For any $x\in \hypercube\setminus\absorbzone[\alpha/2]$, by \zcref{lem:h-positive}, $\proba[x]{\hittingtime{\absorbzone/2}<\infty} = 1$. Applying strong Markov property, 
        \begin{align*}
            \proba[x]{\hittingtime{\absorbingstates}<\infty} 
            & = \expect[x]{
                \1_{\set{\hittingtime{\absorbingstates}<\infty}}
            }\\ 
            & = \expect[x]{
                \expect[x]{
                    \1_{\set{\hittingtime{\absorbingstates}<\infty}}\circ \theta_{\hittingtime{\absorbzone[\alpha/2]}}
                }[\scrF_{\hittingtime{\absorbzone[\alpha/2]}}]
            }\\ 
            & = \expect[x]{
                \expect[\process_{\hittingtime{\absorbzone[\alpha/2]}}]{\1_{\set{\hittingtime{\absorbingstates}<\infty}}}
            }\\ 
            & = \expect[x]{
                \proba[\process_{\hittingtime{\absorbzone[\alpha/2]}}]{\hittingtime{\absorbingstates}<\infty}}\\
            & \geq \frac{1}{2}.\\
        \end{align*}
        Since the set $\absorbingstates$ is absorbing, the family of events  $\set{\process_n\in\absorbingstates}$ is increasing, so for any $x\in\hypercube$,
        \[
            0<\proba[x]{\hittingtime{\absorbingstates}<\infty} = \proba[x]{\exists n\in\nintegers,~\process_n\in\absorbingstates}=\proba[x]{\bigcup_{n\in\nintegers}\set{\process_n\in\absorbingstates}} = \lim_{n\to \infty}\proba[x]{\process_n\in \absorbingstates}.
        \]
        Yet, because the process $h(\process)$ is a martingale, for any $n\in\nintegers$,
        \begin{multline*}
        	h(x) = \expect[x]{h(\process_n)} =  \expect[x]{h\br{\process_{n}}\1_{\set{\process_{n}\in \absorbingstates}}}+ \expect[x]{h\br{\process_{n}}\br{1-\1_{\set{\process_{n}\in \absorbingstates}}}} \\
        	\geq \proba[x]{\process_{n}\in\absorbingstates}+\frac{1}{2}\br{1-\proba[x]{\process_{n}\in\absorbingstates}},
        \end{multline*}
        and as $n$ tends to infinity, 
        \[
            h(x) \geq h(x)+\frac{1}{2}\br{1-h(x)},
        \]
        \ie 
        \[
            \frac{1}{2}\br{1-h(x)}\leq 0,
        \]
        therefore, $h(x)\geq 1$.
        As a conclusion, for any $x\in\hypercube$, 
        \[
            \proba[x]{\hittingtime{\absorbingstates}<\infty} = 1.
        \]
    \end{proof}

\section*{Acknowledgements}

    The authors acknowledge the support of the CDP C\textsuperscript{2}EMPI, as well as the French State under the France-2030 programme, the University of Lille, the Initiative of Excellence of the University of Lille, the European Metropolis of Lille for their funding and support of the R-CDP-24-004-C\textsuperscript{2}EMPI Project. 
    
    \textsc{C. Wang} acknowledges the support of the BREF Programme Interne Blanc MITI 2023.2-CNRS
    and the PEPR-France 2030 Project Made in France n°ANR-25-PEFO-004, and thanks her supervisors \textsc{O. Goubet} and \textsc{F. Paccaut} for their help and the useful discussions.

\printbibliography 

\end{document}